\documentclass[11pt,a4paper,twoside]{article}
\usepackage[T1]{fontenc}
\usepackage{lmodern,amsmath,amssymb,amsthm,mathtools,booktabs}
\usepackage[left=29mm,right=29mm,top=29mm,bottom=29mm,headheight=13pt,headsep=20pt,footskip=30pt]{geometry}
\usepackage{fancyhdr,titlesec,microtype,etoolbox}
\usepackage[hidelinks]{hyperref}
\usepackage{authblk}

\titleformat{\section}{\normalfont\large\bfseries\filcenter}{\thesection.}{0.5em}{}
\titleformat{\subsection}{\normalfont\normalsize\bfseries}{\thesubsection}{0.5em}{}
\titlespacing*{\section}{0pt}{14pt plus 3pt minus 2pt}{9pt}
\titlespacing*{\subsection}{0pt}{12pt plus 3pt minus 2pt}{7pt}
\theoremstyle{plain}
\newtheorem{theorem}{Theorem}[section]
\newtheorem{lemma}[theorem]{Lemma}
\newtheorem{corollary}[theorem]{Corollary}
\newtheorem{proposition}[theorem]{Proposition}

\theoremstyle{remark}
\newtheorem{remark}[theorem]{Remark}

\numberwithin{equation}{section}
\newcommand{\Ham}{\operatorname{Ham}}
\newcommand{\Aut}{\operatorname{Aut}}
\newcommand{\Mod}{\operatorname{Mod}}
\newcommand{\PSL}{\operatorname{PSL}}
\newcommand{\Sym}{\operatorname{Sym}}
\newcommand{\norm}[1]{\left\lVert #1\right\rVert}

\newcommand{\id}{\operatorname{id}}

\appto\normalsize{%
\setlength{\abovedisplayskip}{6pt plus 2pt minus 4pt}%
\setlength{\belowdisplayskip}{6pt plus 2pt minus 4pt}%
\setlength{\abovedisplayshortskip}{2pt plus 2pt}%
\setlength{\belowdisplayshortskip}{5pt plus 2pt minus 3pt}}

\begin{document}

\thispagestyle{plain}
\begin{center}
{\Large\bfseries The space of equators has infinite Hofer's diameter\par}

\title{The space of equators has infinite Hofer's diameter}




\vspace{6pt}
{\large Leonid Polterovich\footnote{Tel Aviv University, 
School of Mathematical Sciences,
69978, Tel Aviv, Israel,
polterov@tauex.tau.ac.il} and Egor Shelukhin\footnote{Egor Shelukhin, 
Department of Mathematics and Statistics,
University of Montreal,
C.P. 6128 Succ.  Centre-Ville
Montreal, QC, H3C 3J7, Canada,
egor.shelukhin@umontreal.ca} \par}
\end{center}
\vspace{4pt}

\begin{abstract}
We settle several questions about Hofer's geometry of the two-sphere. We prove that the space of equators
has infinite Hofer diameter, construct simple commutators of arbitrarily
large Hofer norm, and exhibit a Hamiltonian diffeomorphism whose iterates
escape to infinity in Hofer distance from the set of autonomous
diffeomorphisms. Finally, we construct a bi-Lipschitz group monomorphism
from the free group of rank two, equipped with its conjugation-invariant
generator norm, into the Hamiltonian group equipped with Hofer's metric.
The proof rests on an interplay between Lagrangian Floer theory in the fourth
symmetric product of the sphere and combinatorial group theory. This paper was written with substantial assistance from AI.
\end{abstract}

\begingroup
\tableofcontents
\endgroup

\section{Introduction and main results}
\subsection{Hofer's metric and the main results}
Equip the two dimensional sphere $S^2$ with an area form $\omega$ of total area one. Write
$\Ham(S^2)$ for its group of Hamiltonian diffeomorphisms.
 The Hofer length of a smooth path $\{\phi_H^t\}$ of Hamiltonian diffeomorphisms generated by a
Hamiltonian $H:[0,1]\times S^2\to\mathbb R$ is
\[
\norm{H}_{(1,\infty)}=\int_0^1\bigl(\max H_t-\min H_t\bigr)\,dt.
\]
The Hofer norm and its bi-invariant metric are
\[
\norm{\phi}_{\rm H}=\inf_{\phi_H^1=\phi}\norm{H}_{(1,\infty)},\qquad
d_{\rm H}(\phi,\psi)=\norm{\psi^{-1}\phi}_{\rm H}.
\]
Here and below, an infimum over Hamiltonians is taken over all smooth
Hamiltonian paths with the prescribed endpoint. See \cite{Hof90, Pol01}
for background. Throughout the paper we use the Hamiltonian convention
\[
{\iota_{X_{H_t}}\omega=-dH_t.}
\]
We use the notation $[g,h]=ghg^{-1}h^{-1}$, and compositions of maps act from right to left.

An \emph{equator} is a smooth embedded circle whose complementary disks
both have area $1/2$. The Hamiltonian group acts transitively on the
space $\mathcal E$ of unoriented equators. Its quotient Hofer metric is
\begin{equation}\label{eq:equator-metric}
d_{\mathcal E}(E,E')=\inf\{\norm{\phi}_{\rm H}:\phi(E)=E'\}.
\end{equation}
The autonomous maps form the subset
$\Aut(S^2)=\{\phi_H^1:H\in C^\infty(S^2)\}$.

\begin{theorem}\label{thm:equators}
The space of equators has infinite Hofer diameter. In fact there is a Hamiltonian diffeomorphism $f\in\Ham(S^2)$, an equator $E_0$, and constants
$c,C>0$ such that
\begin{equation}\label{eq-est-eq}
c|m-n|\le d_{\mathcal E}(f^m(E_0),f^n(E_0))\le C|m-n|
\qquad(m,n\in\mathbb Z).\end{equation}

\end{theorem}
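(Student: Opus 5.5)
The upper bound is immediate. Since $d_{\mathcal E}$ is invariant under the $\Ham(S^2)$-action (the Hofer norm is conjugation invariant and $\phi(E)=E'$ iff $g\phi g^{-1}(gE)=gE'$), we have
\[
d_{\mathcal E}(f^m(E_0),f^n(E_0))=d_{\mathcal E}(E_0,f^{n-m}(E_0))\le \norm{f^{n-m}}_{\rm H}\le |m-n|\,\norm{f}_{\rm H}.
\]
So $C=\norm{f}_{\rm H}$ works. The whole content is the lower bound $d_{\mathcal E}(E_0,f^k(E_0))\ge c|k|$. By the same invariance it suffices to show
\[
\norm{\phi}_{\rm H}\ge c|k| \quad\text{for every } \phi \text{ with } \phi(E_0)=f^k(E_0).
\]
Equivalently, we need a quasimorphism-type invariant on the space of equators: a function $\Phi:\mathcal E\to\mathbb R$ that is Hofer-Lipschitz, $|\Phi(\phi E)-\Phi(E)|\le A\norm{\phi}_{\rm H}+B$, and grows linearly along the orbit $f^k(E_0)$.

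Here is the plan, following the abstract. Pass to the Lagrangian link $\underline L=L_1\cup\dots\cup L_4\subset S^2$ of four circles and the corresponding Lagrangian torus $\Sym(\underline L)$ in $\Sym^4(S^2)$. Choose the link to contain $E_0$, with $E_0$ separating the others two-and-two and the areas chosen so that Floer theory of $\Sym(\underline L)$ is nonzero (the monotone-link condition of Cheng-Mellor/Mak-Smith, Polterovich-Shelukhin). Spectral invariants of the Lagrangian $\Sym(\underline L)$ then give homogeneous quasimorphisms $\mu$ on $\Ham(S^2)$. The key point is that a suitable combination (difference of two such link quasimorphisms, or a link quasimorphism minus the Calabi/equator one) vanishes, up to bounded error, on the stabilizer of $E_0$.

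Step 1: produce a homogeneous quasimorphism $\mu$, Hofer-Lipschitz, with $|\mu(\psi)|\le D$ for all $\psi$ preserving $E_0$. Then for $\phi,\phi'$ with $\phi(E_0)=\phi'(E_0)$ we have $\phi^{-1}\phi'\in\operatorname{Stab}(E_0)$, so $\Phi(E):=\mu(\phi)$ for any $\phi E_0=E$ is well defined up to bounded error and Lipschitz. To get the bound on the stabilizer, I would use that $\operatorname{Stab}(E_0)$ is generated by maps supported in the two hemispheres together with rotations. Link spectral invariants whose links are symmetric across $E_0$ see hemisphere-supported maps only through their Calabi invariants. So taking the difference of two links that share the same hemisphere data kills them, and defect control handles products.

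Step 2: construct $f$ and show $\mu(f)\ne0$, which gives $\Phi(f^kE_0)\ge |k||\mu(f)|-D'$. I expect this to be the main obstacle, since $f$ must not be autonomous-like for $\mu$. I would take $f$ to be a product of two autonomous maps supported near disjoint or differently placed disks, something like a commutator-type element $f=[a,b]$. I would then compute $\mu(f)$ via the combinatorial group-theory input: $a$ and $b$ generate a free group acting on the link, and $\mu$ restricted to it reduces to a counting (Brooks-type) quasimorphism, which is nonzero on a suitable word. Carrying out this nonvanishing computation, which requires Floer-theoretic control of spectral invariants for non-autonomous $f$ by splitting into pieces where $\mu$ is computable via Calabi on disks, is where I expect the real work. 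Small values of $|m-n|$ are then absorbed into $c$ using $E_0\ne f^k(E_0)$ and a positive lower bound on $d_{\mathcal E}$ for distinct orbit points.
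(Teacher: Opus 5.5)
Your upper bound is correct, and so is the reduction of the lower bound to bounding $\norm{\phi}_{\rm H}$ over the coset $f^k\operatorname{Stab}(E_0)$. The lower bound itself, however, is not proved. Both of your steps rest on claims for which you give no mechanism.

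\textbf{Step~1.} This step is exactly the crux of Khanevsky's strategy. The paper cites Serraille--Trifa for a study of precisely this route, and that route had not settled the question. You need a Hofer-Lipschitz homogeneous quasimorphism on $\Ham(S^2)$ vanishing on $\Ham_c$ of both open hemispheres. Your assertion that link spectral quasimorphisms see hemisphere-supported maps ``only through Calabi'' is not a known property. No restriction theorem of that kind is available for a half-area disk that contains two components of the link, and maps supported there can braid those components. ``Taking the difference of two links with the same hemisphere data'' supplies neither a cancellation argument nor any reason the difference is nonzero. Your link description is also inconsistent: a four-component link containing $E_0$ cannot have its other components split two and two.

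\textbf{Step~2.} You leave this open yourself, and it cannot go as sketched. A spectral invariant of a non-autonomous map does not factor through a word in $a,b$. So there is no route to identifying $\mu$ on $\langle a,b\rangle$ with a Brooks counting quasimorphism.

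\textbf{The missing idea.} The paper does not use a quasimorphism on $\Ham(S^2)$ at all.
\begin{itemize}
\item The Strip Lemma~\ref{lem:strip} gives, for Hamiltonians $H_0,H_1$, chords with endpoints on $T=\Sym(L)\subset\Sym^4(S^2)$ (four caps, with $E_0=\{x=0\}$ splitting them two and two). These chords are joined by a continuation strip $u$ with $\eta\,(u\cdot\Delta)\le4\norm{H_0-H_1}_{(1,\infty)}+\nu$.
\item Read the chords as braids in $B_4(S^2)=\pi_1(\Sym^4(S^2)\setminus\Delta)$. The strip shows that their braid labels differ by a product of $r=u\cdot\Delta$ conjugates of inverse half twists (Lemma~\ref{lem:meridian}). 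Hence any homogeneous quasimorphism $Q$ on $B_4(S^2)$ vanishing on half twists changes by at most $rD$ between the two chords.
\item The paper takes $Q=\mathfrak q\circ\pi\circ\operatorname{ev}$. Here $\pi:\Mod(S^2,4)\to\PSL(2,\mathbb Z)$ comes from the branched double cover, and $\mathfrak q$ is the homogenized counting quasimorphism of the reduced word of $W$.
\item $\mathfrak q$ vanishes on parabolic and finite-order elements. Hence $Q$ vanishes on half twists, and on every chord of a map preserving $E_0$, since that class is reducible (Lemma~\ref{lem:direct-equator-stabilizer}).
\item Take $f_0=a^2b^{-1}a^3b^{-2}$, which fixes the caps and projects to the hyperbolic $W$. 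Every chord of $f_0^n$ has $Q=n$.
\item Therefore $d_{\rm H}(f_0^n,h)\ge\eta|n|/(4D)$ for every $h\in\operatorname{Stab}(E_0)$, which gives the lower bound via the quotient formula.
\end{itemize}
Nonvanishing thus reduces to the word computation $\mathfrak q(W)=1$, not a spectral-invariant computation. Note that $Q$ is never a function on $\Ham(S^2)$. It is evaluated only on chords of maps that fix $T$ or preserve $E_0$, where it is constant. That is what replaces the global quasimorphism your plan would need but does not construct.
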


\begin{remark}\label{rem-equators-quant}
We shall give two proofs of this statement. One of them, presented in Section \ref{sec:direct-equators} below, is direct. The other will follow quickly from Theorem \ref{thm:commutators} below.
\end{remark}


\begin{theorem}\label{thm:commutators}
There are simple commutators of arbitrarily large Hofer norm in
$\Ham(S^2)$. In fact, there exist $A,B\in\Ham(S^2)$ and $c>0$ such that
\[
\norm{[A^m,B^m]}_{\rm H}\ge cm
\qquad\text{for every integer }m\ge1.
\]
\end{theorem}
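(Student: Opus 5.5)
The plan is to take $B$ to be the rotation-type map carrying the equator problem, or more simply a map generated by an autonomous Hamiltonian, and $A$ a map displacing a suitable configuration. The lower bound will come from a quasimorphism-type invariant built from Lagrangian Floer theory of link Lagrangians in $\Sym^4(S^2)$. Concretely, I would use the link spectral invariants of Cristofaro-Gardiner--Humilière--Mak--Seyfaddini--Smith for a link $\underline L$ of four disjoint circles, i.e.\ the Lagrangian torus $\Sym(\underline L)\subset \Sym^4(S^2)$. These give a homogeneous quasimorphism $\mu:\Ham(S^2)\to\mathbb R$ that is Hofer-Lipschitz, $|\mu(\phi)-\mu(\psi)|\le C\,d_{\rm H}(\phi,\psi)$. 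One cannot apply $\mu$ directly to the commutator, since homogeneous quasimorphisms are bounded on simple commutators (by the defect). So instead I would use the difference $\nu=\mu_1-\mu_2$ of two such quasimorphisms, coming from two links $\underline L_1,\underline L_2$ with the same number of components. This difference is not a quasimorphism, but by Hofer-Lipschitz continuity it satisfies $|\nu(\phi)|\le C'\norm{\phi}_{\rm H}$. The key is then a lower bound of the form $|\nu([A^m,B^m])|\ge cm$.

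The steps are as follows.

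\textbf{First}, choose $\underline L_1,\underline L_2$ and supports so that, writing $B=\phi_G^1$ with $G$ autonomous, the following holds: $A^m$ maps a neighbourhood of $\underline L_1$ into a region where $G$ is locally constant. At the same time, $\underline L_2$ should see $G$ differently.

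\textbf{Second}, compute $[A^m,B^m]=(A^mB^mA^{-m})B^{-m}$. The first factor is generated by $m\,G\circ A^{-m}$ and the second by $-mG$; both are autonomous flows. For links, the spectral invariant of an autonomous Hamiltonian which is constant near the link components, or whose level sets contain the components, is computable: it is the average of the values on the components, up to a constant. I would arrange $G$, $A$ and the links so that both factors are simultaneously "computable" for each $\underline L_i$. To do this I would use the invariance $\mu(\psi\phi\psi^{-1})=\mu(\phi)$ together with additivity on commuting or disjointly supported pieces. The target is that $\nu$ of the product equals $m$ times a nonzero combination of the values of $G$ on link components, plus $O(1)$ from the defects.

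\textbf{Third}, conclude $cm\le|\nu([A^m,B^m])|\le C'\norm{[A^m,B^m]}_{\rm H}$.

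The main obstacle is the second step. Each $\mu_i$ alone gives $\mu_i([A^m,B^m])=O(1)$ in the homogeneous setting, so the growth must come from the non-homogeneous part. The idea is to use spectral invariants $c_{\underline L}$ themselves rather than their homogenizations: these are only subadditive with error controlled by the link, and the difference $c_{\underline L_1}-c_{\underline L_2}$ can grow linearly on commutators. I would first try an explicit configuration of four-component links in which $A$ is a rotation-like map permuting "equal-area" slots, and check the linear growth by a direct computation with Hamiltonians that are constant near the links. Here the combinatorial group theory, namely the freeness of $\langle A,B\rangle$ and the counting of letters, would guarantee that no cancellation occurs.
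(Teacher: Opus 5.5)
Your approach has a genuine gap, and it lies in the choice of invariant rather than in the bookkeeping. Your claim that $\nu=\mu_1-\mu_2$ ``is not a quasimorphism'' is false. If $\mu_1,\mu_2$ are homogeneous quasimorphisms with defects $D_1,D_2$, then $\nu$ is a homogeneous quasimorphism of defect at most $D_1+D_2$, hence conjugation invariant. Since $\nu(A^mB^mA^{-m})+\nu(B^{-m})=\nu(B^m)-\nu(B^m)=0$, we get
\[
|\nu([A^m,B^m])|=\bigl|\nu([A^m,B^m])-\nu(A^mB^mA^{-m})-\nu(B^{-m})\bigr|\le D_1+D_2
\]
for every $m$. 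Passing to the non-homogenized $c_{\underline L}$ does not help. On $S^2$ these themselves satisfy a two-sided quasimorphism estimate, so $c_{\underline L}-\mu_{\underline L}$ is uniformly bounded. Hence every finite real combination of link spectral invariants is bounded on all simple commutators by a constant depending only on the links. Your Step~2 shows the mechanism directly. Once you compute the two autonomous factors by Lagrangian control and combine them using conjugation invariance ``up to $O(1)$ from the defects'', the two linear terms must cancel, whatever links, $G$ and $A$ you choose. Freeness of $\langle A,B\rangle$ has no way to enter a real-valued estimate of this type.

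The missing idea is to bound a conjugation-invariant \emph{word norm} in terms of Hofer's metric, rather than a real quasimorphism. Such norms can grow linearly on simple commutators even though every homogeneous quasimorphism is bounded there; for instance, $\nu_F([x^m,y^m])=2m$ for the conjugation-invariant generator norm on $F_2$.

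The paper deduces the theorem from a bi-Lipschitz monomorphism $\Phi:(F_2,\nu_F)\to(\Ham(S^2),d_{\rm H})$ with $A=\Phi(x)=a^3b^{-1}a^3$ and $B=\Phi(y)=b^{-3}a^2b^{-3}$. Here $a,b$ are twists about two orthogonal equators that fix four caps. The lower bound comes from the Strip Lemma. Floer theory of $T=\Sym(L)\subset\Sym^4(S^2)$, with diagonal-weighted Novikov coefficients, supplies a continuation strip $u$ between a chord of a nearly Hofer-minimal path to $\Phi(w)$ and a chord of the constant path, with $\eta\,(u\cdot\Delta)\le4\norm{\Phi(w)}_{\rm H}+\varepsilon$. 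Each intersection with the diagonal contributes one conjugate of an inverse half twist to the braid relating the two chords.

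Projecting through the branched double cover to $\PSL(2,\mathbb Z)=C_2*C_3$ gives $\tau(\iota(w))\le2\,(u\cdot\Delta)$ for the torsion norm $\tau$. An explicit marker decoder $\mathcal D:C_2*C_3\to F_2$ with $\mathcal D\circ\iota=\mathrm{id}$ and bounded $d_F$-defect then gives $\nu_F(w)\le104\,\tau(\iota(w))$. Because $\mathcal D$ takes values in $(F_2,\nu_F)$ rather than in $\mathbb R$, the commutator obstruction above does not apply. Taking $w=[x^m,y^m]$ yields $\norm{[A^m,B^m]}_{\rm H}\ge m/19968$.
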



\begin{theorem}\label{thm:autonomous}
There exists a sequence of Hamiltonian diffeomorphisms $f_n \in \Ham(S^2)$
with linearly
growing Hofer distance from all autonomous maps:
\[
cn\le d_{\rm H}(f_n,\Aut(S^2))\le Cn\qquad(n\ge1)
\]
for some $c,C>0$.
In fact, $f_n$ can be taken as iterates $f^n$ of some $f\in\Ham(S^2)$.
\end{theorem}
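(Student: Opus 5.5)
The plan is to find a quantity that is bounded on autonomous maps and grows linearly along the iterates of a suitable $f$, and that is Lipschitz with respect to Hofer's metric. The upper bound is then trivial. Writing $f=\phi^1_F$ and taking the identity, which is autonomous,
\[
d_{\rm H}(f^n,\Aut(S^2))\le \norm{f^n}_{\rm H}\le n\norm{F}_{(1,\infty)},
\]
so $C=\norm{F}_{(1,\infty)}$ works. All the work is in the lower bound.

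For the lower bound I will use a homogeneous quasimorphism-type invariant $\mu$ on $\Ham(S^2)$ with three properties:
\begin{enumerate}
\item[(i)] it is Hofer–Lipschitz, i.e. $|\mu(\phi)-\mu(\psi)|\le L\, d_{\rm H}(\phi,\psi)$;
\item[(ii)] it is uniformly bounded, or at least vanishes, on $\Aut(S^2)$;
\item[(iii)] $\mu(f^n)\ge an$ for some $a>0$.
\end{enumerate}
The natural candidates are the Lagrangian spectral invariants of links in $S^2$. The abstract points to the fourth symmetric product, so I take the Floer theory of a $4$-component link $\underline L$ viewed as a Lagrangian torus in $\Sym^4(S^2)$. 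In the style of Cristofaro-Gardiner–Humilière–Seyfaddini and Polterovich–Shelukhin, this gives spectral invariants $c_{\underline L}$ whose homogenizations $\mu_{\underline L}$ are Hofer–Lipschitz quasimorphisms.

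Given these, the argument would run as follows. First I take two links $\underline L,\underline L'$ (say two $\eta$-monotone configurations of four circles) whose quasimorphisms agree on all autonomous Hamiltonians up to a bounded error. For an autonomous $H$, both $\mu_{\underline L}(\phi_H)$ and $\mu_{\underline L'}(\phi_H)$ should be computable from the Reeb graph of $H$ as averaging functionals, and I would choose the links so that these functionals coincide; if they match only up to a constant, the argument below still goes through. Setting $\nu=\mu_{\underline L}-\mu_{\underline L'}$, we get a Hofer–Lipschitz function with $|\nu|\le K$ on $\Aut(S^2)$. Next I construct $f$, presumably a commutator-like element such as $f=[A,B]$ in the spirit of Theorem~\ref{thm:commutators}, with $\nu(f)\neq0$. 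By homogeneity $|\nu(f^n)|=n|\nu(f)|$. Then for any autonomous $g$,
\[
n|\nu(f)|-K\le |\nu(f^n)-\nu(g)|\le 2L\, d_{\rm H}(f^n,g),
\]
and taking the infimum over $g$ gives $d_{\rm H}(f^n,\Aut(S^2))\ge cn$ for large $n$. Small $n$ are handled by shrinking $c$: since $f^n$ is not autonomous for $n\ge1$, the distance is positive, and the closed Hofer neighbourhoods argument gives this, or I simply start the estimate at large $n$.

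The main obstacle is step (ii) together with the nonvanishing in (iii): exhibiting two link quasimorphisms that agree on every autonomous diffeomorphism but disagree on some $f$. For (ii) I would first try to prove that on autonomous maps the link spectral invariant depends only on the areas cut out by the Reeb graph, via a computation reducing to disjointly supported radial functions, and then pick links related by an area-preserving symmetry that fixes those area data. For (iii) I would test $f=[A,B]$ where $A,B$ are supported near a configuration that distinguishes the two links combinatorially. The nonvanishing would then be checked through the free-group structure: the bi-Lipschitz embedding of $F_2$ means such a commutator is detected by the Floer-theoretic invariants.
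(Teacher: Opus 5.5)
Your outer framework is sound. The upper bound $d_{\rm H}(f^n,\Aut(S^2))\le\norm{f^n}_{\rm H}\le n\norm{F}_{(1,\infty)}$ is correct. Any Hofer--Lipschitz homogeneous function $\nu$ that is bounded on $\Aut(S^2)$ and has $\nu(f)\ne0$ does give the linear lower bound. Boundedness on $\Aut(S^2)$ already forces $\nu|_{\Aut(S^2)}=0$, since $\phi_H^k=\phi_{kH}$ is again autonomous. So there is no small-$n$ issue, and the appeal to ``closed Hofer neighbourhoods'' is unnecessary.

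The gap is that you never produce $\nu$. Properties (ii) and (iii) are the whole content of the theorem, and both are left as expectations.

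Worse, the route you propose for (ii) destroys (iii). Suppose $L'=\psi(L)$ with $\psi\in\Ham(S^2)$. This holds for links related by an orientation-preserving area-preserving symmetry, and more generally for links with the same combinatorics and the same areas. Then naturality, $c_{\psi(L)}(\phi)=c_L(\psi^{-1}\phi\psi)$, together with conjugation invariance of homogeneous quasimorphisms gives $\mu_{L'}=\mu_L$, so $\nu\equiv0$. For links with genuinely different data, you give no reason why the values on autonomous maps should agree.

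Step (iii) also fails as stated. The bi-Lipschitz embedding of $F_2$ does not imply that any real-valued homogeneous quasimorphism on $\Ham(S^2)$ is nonzero on a commutator. The lower bound in Theorem~\ref{thm:f2embedding} comes from a decoder with values in $F_2$, which the paper stresses is not a real quasimorphism. In any case, large Hofer norm of $f^n$ says nothing about its distance from $\Aut(S^2)$.

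The missing idea is to take for $\nu$ the asymptotic translation length of the isometric action on equators, $\ell(g)=\lim_{n\to\infty}d_{\mathcal E}(g^n(E),E)/n$.
\begin{itemize}
\item Property (i) is automatic, since $d_{\mathcal E}(g^n(E),h^n(E))\le d_{\rm H}(g^n,h^n)\le n\,d_{\rm H}(g,h)$. Homogeneity and conjugation invariance are formal.
\item Property (ii) follows from \cite{BFPS}: every autonomous map is, after conjugation, within a uniformly bounded Hofer distance of a map preserving the standard equator, and $\ell$ vanishes on such maps. So $\ell$ is bounded on $\Aut(S^2)$, hence zero there by homogeneity.
\item Property (iii) is exactly Theorem~\ref{thm:equators}, which provides $f$ with $\ell(f)>0$.
\end{itemize}
In the paper, Theorem~\ref{thm:autonomous} is thus a short corollary of the equator theorem, and no new quasimorphism on $\Ham(S^2)$ is needed.
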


Our final result deals with quasi-isometric embeddings of the free group
to $\Ham(S^2)$. Let $F_2=\langle x,y\rangle$. Its conjugation-invariant generator norm is
\[
\nu_F(w)=\min\left\{r:w=\prod_{i=1}^r c_i e_i c_i^{-1},\quad
c_i\in F_2,\quad e_i\in\{x^{\pm1},y^{\pm1}\}\right\}.
\]
The empty product has length zero. Conjugating elements are unrestricted
within $F_2$ and do not contribute to the length. This gives the
bi-invariant metric $d_F(v,w)=\nu_F(v^{-1}w)$; it is not ordinary
reduced-word distance.

\begin{theorem}\label{thm:f2embedding} There exists a bi-Lipschitz group monomorphism $\Phi:F_2\to\Ham(S^2)$  satisfying
\[
\frac1{39936}\nu_F(v^{-1}w)
\le d_{\rm H}\bigl(\Phi(v),\Phi(w)\bigr)
\le4\nu_F(v^{-1}w)\qquad(v,w\in F_2).
\]
\end{theorem}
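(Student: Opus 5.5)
The upper bound is the easy half. We choose $\Phi(x)=A$, $\Phi(y)=B$, where $A,B\in\Ham(S^2)$ have $\norm{A}_{\rm H},\norm{B}_{\rm H}\le 4$ (in fact much smaller, e.g.\ generated by autonomous Hamiltonians of oscillation at most $1$). Write $v^{-1}w=\prod_{i=1}^r c_ie_ic_i^{-1}$. Conjugation invariance and the triangle inequality for the bi-invariant Hofer norm then give $d_{\rm H}(\Phi(v),\Phi(w))=\norm{\Phi(v^{-1}w)}_{\rm H}\le r\max(\norm{A}_{\rm H},\norm{B}_{\rm H})\le 4r$. Injectivity of $\Phi$ is not a separate issue: it follows from the lower bound, since $\nu_F(w)>0$ for $w\ne 1$.

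For the lower bound I would build a conjugation-invariant, Lipschitz quasimorphism-type invariant on $\Ham(S^2)$ that reads off $\nu_F$ up to a constant. The plan is to use Lagrangian Floer theory in $\Sym^4(S^2)$, as the abstract indicates. Take a configuration of four disjoint circles $L_1,\dots,L_4\subset S^2$ whose complement has the area distribution required for $\Sym(L)=L_1\times\dots\times L_4$ to be monotone with nonvanishing Floer cohomology. The associated spectral invariants $c(\phi)$ then give homogeneous quasimorphisms $\mu$ on $\Ham(S^2)$ that are Hofer-Lipschitz, with $|\mu(\phi)|\le K\norm{\phi}_{\rm H}$ (Cristofaro-Gardiner--Humili\`ere--Mak--Seyfaddini--Smith style link invariants). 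The differences $c_L-c_{L'}$ for two such links are even Hofer-Lipschitz genuine invariants that vanish on maps supported in suitable regions.

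We choose $A,B$ supported in two disks $D_A,D_B$ (or annuli) arranged so that the induced map $F_2\to\Ham(S^2)$ is injective, for instance ping-pong type Dehn-twist-like shears. We then evaluate a family of such quasimorphisms on words. Pulled back, $\mu\circ\Phi$ is a homogeneous quasimorphism on $F_2$, but homogeneous quasimorphisms alone bound only stable commutator length, not $\nu_F$.

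The key combinatorial step is therefore the following. Any homogeneous quasimorphism $\psi$ on $F_2$ with bounded defect $D(\psi)$ and $|\psi(x)|,|\psi(y)|\le 1$ satisfies $|\psi(w)|\le (1+D)\,\nu_F(w)$, which is the trivial direction. Conversely, I would show that $\nu_F$ is bounded above by a constant times $\sup_\psi|\psi(w)|$, where $\psi$ ranges over quasimorphisms of a specific counting type, namely Brooks-type counting quasimorphisms for short subwords, together with the abelianization. These have to be realizable as $\mu\circ\Phi$ up to bounded error, by placing Lagrangian links adapted to $A$ and $B$. I expect the constant $39936$ to come from combining the defect bounds of these counting functions with the Lipschitz constant of the Floer invariants.

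The main obstacle is this comparison, because $\nu_F$ is not controlled by homogeneous quasimorphisms on its nose: commutators of high powers have small scl relative to their $\nu_F$. I would instead use non-homogeneous but conjugation-invariant Lipschitz functions, namely the spectral invariants $c(\Phi(w))$ themselves, which obey a triangle inequality. The aim is then to prove a lower bound $c(\Phi(w))\ge \nu_F(w)/K$ by a combinatorial induction on a normal form for $w$, cyclically reduced up to conjugation, in which each block $x^{a}$ or $y^{b}$ contributes a computable action via the Floer complex of explicitly displaced circles.
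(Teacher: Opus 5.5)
\paragraph{Verdict: genuine gap in the lower bound.} Your upper bound is fine and is essentially the paper's. There, $A=a^3b^{-1}a^3$ and $B=b^{-3}a^2b^{-3}$ are words in two twists of Hofer norm at most $1/2$. The gap is in the lower bound. Every invariant you plan to extract from the Floer theory is real-valued, and the obstruction you identify defeats all of them, not only the homogeneous ones.

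For $w_m=[x^m,y^m]$ one has $\nu_F(w_m)=2m$. On the other hand, $|\mu([A^m,B^m])|\le D(\mu)$ for every homogeneous quasimorphism $\mu$, whatever $A,B$ are. Retreating to non-homogenized spectral invariants does not help. As far as I recall \cite{CHMSS22}, their link spectral invariants $c$ on $S^2$ satisfy the triangle inequality and are themselves quasimorphisms with uniform defect; given the triangle inequality, this is equivalent to $c(\phi)+c(\phi^{-1})$ being uniformly bounded. That is how their homogenizations become quasimorphisms. So $c$ lies within bounded distance of a homogeneous quasimorphism, and $c(\Phi(w_m))$ stays bounded as $m\to\infty$.

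Consequently, the inequality $c(\Phi(w))\ge\nu_F(w)/K$ that your ``combinatorial induction'' is meant to prove is false for these invariants, for any choice of generators. Moreover, spectral values are not additive over blocks $x^a$, $y^b$, so the induction has no mechanism in any case.

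\paragraph{The missing idea.} The Floer theory has to output \emph{non-abelian} data, decoded by a free-group-valued rather than real-valued map.

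In the paper, the spectral numbers ($C_0,C_1$ in Proposition~\ref{prop:selected-strip}) serve only to select a single continuation strip $u$. This strip joins a chord of a nearly Hofer-minimal path to $\Phi(w)$ with a chord of the constant path, and satisfies $\eta\,(u\cdot\Delta)\le4\norm{\Phi(w)}_{\rm H}+\nu$ (Lemma~\ref{lem:strip}). Every $\Phi(w)$ fixes the four caps bounded by $L$. Hence the boundary of $u$ writes the configuration braid of $\Phi(w)$ as a product of $r=u\cdot\Delta$ conjugates of half twists (Lemma~\ref{lem:meridian}). Projecting through the branched double cover to $\PSL(2,\mathbb Z)=C_2*C_3$ gives $\tau(\iota(w))\le2r$ for the conjugation-invariant torsion norm.

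The algebra is then carried by a marker decoder $\mathcal D:C_2*C_3\to F_2$ with three properties: $\mathcal D\circ\iota=\id$, defect at most $78$ for $d_F$, and $\nu_F\circ\mathcal D\le104\,\tau$ (Lemmas~\ref{lem:f2-exact} and~\ref{lem:f2-decoder}). Because it is $F_2$-valued, it is not subject to the commutator bound.

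\paragraph{Choice of generators.} This decoding step also dictates the generators, which your ``ping-pong shears'' leave unspecified, and injectivity alone is not enough. For example, $x\mapsto a$, $y\mapsto b$ is injective, since the images $U,V$ generate a free subgroup of $\PSL(2,\mathbb Z)$. Yet $a$ and $b$ are conjugate by a rotation of Hofer norm at most $1/2$. Hence $\norm{a^nb^{-n}}_{\rm H}\le1$ while $\nu_F(x^ny^{-n})=2n$.
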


\paragraph{Historical and bibliographical comments:}
The question of whether the space of equators has infinite Hofer diameter
has been part of the folklore since 1991 and appears as Problem~32 in the
third edition of McDuff--Salamon~\cite{MS17}.
Important earlier work is due to Khanevsky~\cite{Kha09,Kha11}, who proved
unboundedness for diameters of the disk and related spaces of curves on
the annulus, and developed the method of using Hofer-Lipschitz
quasimorphisms that vanish on stabilizers. Recent work by Serraille--Trifa~\cite{ST26} studied a potential
application of this method to the question about equators. Finally, analogues of the equator problem for other metrics have been
resolved by Jia--Webb~\cite{JW25} for quantitative fragmentation metrics
and by Brandenbursky--Shelukhin~\cite{BS26} for hydrodynamical $L^p$
metrics.

The study of unbounded Hofer distance from autonomous maps goes back to
Polterovich--Shelukhin~\cite{PS16}, who treated closed surfaces of genus
at least four; the general question is Problem~22 in~\cite{MS17}.
Chor~\cite{Cho24} extended the result to genera two and three, and
Khanevsky~\cite{Kha22b} to the torus; Grunder~\cite{Gru26} subsequently
proved a stronger torus result concerning distance from maps of bounded
entropy. Higher-dimensional examples appear in
Polterovich--Shelukhin~\cite{PS16},
Polterovich--Shelukhin--Stojisavljevi\'c~\cite{PSS17},
Jun Zhang~\cite{Zha19}, and Zhijing Wendy Wang~\cite{Wan26}.
For related obstructions involving curves and integrable maps, see
Khanevsky~\cite{Kha21,Kha22a}; for the corresponding question in
hydrodynamical $L^p$ metrics, see Brandenbursky--Shelukhin~\cite{BS15}.

The question of unbounded Hofer norm of simple commutators was posed by
McDuff and Polterovich in~\cite[Question~4]{McD}.
Khanevsky~\cite{Kha16} answered it for surfaces of positive genus and
certain products; Theorem~\ref{thm:commutators} treats the sphere.

For surfaces of genus at least two and certain other manifolds, monomorphisms of free groups into the 
asymptotic cone of the Hamiltonian group (see Section \ref{sec:cone}) were constructed in~\cite{AGK19},~\cite{Cho24},~\cite{Wan26}.
See also \cite{BS17, BMS22} for results and a review of the literature on quasi-isometric group embeddings in the setting of $L^p$ metrics.

Braid and linking methods in Hofer geometry have been developed by
Alves--Meiwes~\cite{AM24}, Connery-Grigg~\cite{CG24}, and
Grunder~\cite{Gru26}. Closely related applications of link Floer homology
to braid types and Hofer energy appear in Morabito~\cite{Mor25},
Chen~\cite{Che24}, Morabito--Trifa~\cite{MT25}, and Trifa~\cite{Tri24}.
These should be viewed alongside the earlier geometric arguments in
Khanevsky's thesis~\cite{Kha11} and the periodic-orbit and Floer-theoretic
constructions used to estimate distance from autonomous maps, including
those of Zhang~\cite{Zha19} and Wang~\cite{Wan26}.

\paragraph{Organization of the paper:}
In Section \ref{sec:lemmas} we present reductions
\[
\text{Theorem~\ref{thm:f2embedding}}
\ \Longrightarrow\ \text{Theorem~\ref{thm:commutators}}
\ \Longrightarrow\ \text{Theorem~\ref{thm:equators}}
\ \Longrightarrow\ \text{Theorem~\ref{thm:autonomous}}.
\]
In Section \ref{sec:floer} we present the input from Floer theory, including the fundamental Strip Lemma \ref{lem:strip}. This lemma eventually leads to a link between Hofer's geometry and algebra, see Lemma \ref{lem:meridian} in Section \ref{sec:braid-topology}. It is used for the proof of Theorem \ref{thm:f2embedding} in Section \ref{sec:f2embedding}, and for the more direct proof of
Theorem~\ref{thm:equators} in Section~\ref{sec:direct-equators}. In this proof the algebraic input is provided by quasi-morphisms. Finally in Section \ref{sec:misc} we provide a few further related results.

\paragraph{Acknowledgements and AI declaration.}
The statements of the main theorems and the reductions were
already known to us from our many years of work on this topic.
The idea of using iterates of a pseudo-Anosov map to prove
that the space of equators has infinite diameter and that
the distance to autonomous maps is unbounded was motivated
in part by Webb's talk on his work~\cite{JW25} at the Geometry
and Dynamics Seminar at Tel Aviv University.
The Floer-theoretic techniques we used were developed
in earlier papers~\cite{CHMSS22,PS23}.

We supplied this and other related material to AI through a long sequence
of prompts (each of us used ChatGPT 6 Astra).
The AI eventually came up with the key Strip Lemma
and its algebraic consequences and produced all the proofs,
both those using Floer theory and those using combinatorial
group theory. It remained for us to clean and simplify the proofs
and clarify the exposition.

\section{Reductions}\label{sec:lemmas}

\begin{proof}[Proof of Theorem \ref{thm:commutators} assuming Theorem \ref{thm:f2embedding}]
Let \(F_2=\langle x,y\rangle\), equipped with the conjugation-invariant
generator norm \(\nu_F\). By
Brandenbursky, Gal, K\k{e}dra and Marcinkowski
\cite[Propositions~2.1 and~2.5]{BGKM16}, the norm \(\nu_F(w)\) equals
the least number of letters that must be deleted from any fixed word
representing \(w\) so that the remaining word represents the identity.
Here \cite[Proposition~2.1]{BGKM16} identifies the cancellation norm with the
conjugation-invariant generator norm, and \cite[Proposition~2.5]{BGKM16} implies
that the cancellation length is independent of the representing
word in a free group.

We claim that
\[
    \nu_F([x^m,y^m])=2m
    \qquad (m\geq 1).
\]
Deleting all \(2m\) occurrences of \(x^{\pm1}\) from
\(x^m y^m x^{-m}y^{-m}\) leaves \(y^my^{-m}\), proving the upper bound.
Conversely, any word obtained by deleting letters has the form
\[
    x^{n_1}y^{n_2}x^{-n_3}y^{-n_4},
    \qquad 0\leq n_1,n_2,n_3,n_4\leq m.
\]
If this word represents the identity, abelianization gives
\(n_1=n_3\) and \(n_2=n_4\). If both \(n_1\) and \(n_2\) are positive, the word
\(x^{n_1}y^{n_2}x^{-n_1}y^{-n_2}\) is freely reduced and nonempty.
Thus either \(n_1=n_3=0\) or \(n_2=n_4=0\).
In either case, all \(2m\) occurrences of one of the generators
and its inverse have been deleted. This proves the lower bound
and hence the claim.

Let \(\Phi:F_2\to\operatorname{Ham}(S^2)\) be the homomorphism
provided by Theorem~\ref{thm:f2embedding}, and put \(A=\Phi(x)\) and \(B=\Phi(y)\).
The homomorphism property and the lower bound in Theorem~\ref{thm:f2embedding} give
\[
    \bigl\|[A^m,B^m]\bigr\|_{\rm H}
    =\bigl\|\Phi([x^m,y^m])\bigr\|_{\rm H}
    \geq \frac{\nu_F([x^m,y^m])}{39936}
    =\frac{m}{19968}.
\]
Therefore the single commutators \([A^m,B^m]\) have unbounded
Hofer norm, as asserted.
\end{proof}

\begin{proof}[Proof of Theorem \ref{thm:equators} assuming Theorem \ref{thm:commutators}]
It suffices to show that for every $\phi,\psi\in\Ham(S^2)$ and any equator $E$,
\begin{equation}
\label{eq-estimate}
d_{\rm H}([\phi,\psi],\id)
\leq 4\max\bigl\{d_{\mathcal E}(\phi(E),E),d_{\mathcal E}(\psi(E),E)\bigr\}+6.
\end{equation}
Indeed, take $A,B\in\Ham(S^2)$ and $c>0$ from Theorem~\ref{thm:commutators}, and put
\[
a_m=d_{\mathcal E}(A^m(E),E),\qquad
b_m=d_{\mathcal E}(B^m(E),E)\qquad(m\ge1).
\]
Both sequences are subadditive because the Hamiltonian group acts by isometries.
Fekete's lemma therefore gives
\[
\alpha=\lim_{m\to\infty}\frac{a_m}{m}
      =\inf_{m\ge1}\frac{a_m}{m},\qquad
\beta=\lim_{m\to\infty}\frac{b_m}{m}
     =\inf_{m\ge1}\frac{b_m}{m}.
\]
The commutator lower bound and \eqref{eq-estimate} imply
\[
cm\le\norm{[A^m,B^m]}_{\rm H}\le4\max\{a_m,b_m\}+6.
\]
Divide by $m$ and let $m\to\infty$ to obtain
$\max\{\alpha,\beta\}\ge c/4$. Choose the corresponding fixed map
$f=A$ or $f=B$. By the infimum formulas above,
\[
d_{\mathcal E}(f^m(E),E)\ge\frac c4\,m\qquad(m\ge1).
\]
Hamiltonian invariance and symmetry now give, for all $m,n\in\mathbb Z$,
\[
\frac c4|m-n|
\le d_{\mathcal E}(f^m(E),f^n(E))
\le\norm{f}_{\rm H}|m-n|.
\]
The upper bound follows by using the map $f^{m-n}$. This proves
Theorem~\ref{thm:equators}, assuming \eqref{eq-estimate}.

To prove \eqref{eq-estimate} we argue as follows. By Hamiltonian invariance, we may assume that $E=\{z=0\}$. Fix
\[
C>\max\{d_{\mathcal E}(\phi(E),E),d_{\mathcal E}(\psi(E),E)\}.
\]
There exists $\theta\in\Ham(S^2)$ with $\norm{\theta}_{\rm H}<C$
such that $\theta(E)=\phi(E)$, so $\theta^{-1}\phi(E)=E$.
Up to composition with the rotation $\mathcal R(x,y,z)=(x,-y,-z)$,
whose Hofer norm is $\norm{\mathcal R}_{\rm H}=1/2$, we can assume that
the restriction $(\phi_1)|_E$ of $\phi_1=\theta^{-1}\phi$ is isotopic
to the identity. For every $\varepsilon>0$, there exists a Hamiltonian
diffeomorphism $f_{\varepsilon}$ of norm
$\norm{f_{\varepsilon}}_{\rm H}<\varepsilon$, supported in an arbitrarily
small neighborhood of $E$, with $f_{\varepsilon}^{-1}\phi_1$ supported
in the complement of a (possibly smaller) open neighborhood of $E$.
Hence we can write $\phi_2=f_{\varepsilon}^{-1}\phi_1$ as a product
$\phi_2=\phi_+\phi_-$, where $\phi_+$ is supported in the upper (open)
hemisphere and $\phi_-$ is supported in the lower (open) hemisphere.
Of course $\phi_+$ and $\phi_-$ commute. By bi-invariance of the Hofer metric,
\[
d_{\rm H}(\phi_+\mathcal R\phi_-\mathcal R^{-1},\phi_+\phi_-)
\leq d_{\rm H}(\mathcal R\phi_-\mathcal R^{-1},\phi_-)
\leq 2\norm{\mathcal R}_{\rm H}=1.
\]
The maps $\phi_+$, $\mathcal R\phi_-\mathcal R^{-1}$ and
$\phi_3=\phi_+\mathcal R\phi_-\mathcal R^{-1}$ are all supported
in the upper hemisphere. Moreover,
\[
d_{\rm H}(\phi,\phi_3)<C+1/2+\varepsilon+1=C+3/2+\varepsilon.
\]
Similarly, there exists $\psi_3\in\Ham(S^2)$ supported in the lower
hemisphere with
\[
d_{\rm H}(\psi,\psi_3)<C+3/2+\varepsilon.
\]
Hence $\phi_3,\psi_3$ commute and
\[
\begin{aligned}
d_{\rm H}([\phi,\psi],\id)
&=d_{\rm H}([\phi,\psi],[\phi_3,\psi_3])\\
&\leq 2d_{\rm H}(\phi,\phi_3)+2d_{\rm H}(\psi,\psi_3)\\
&\leq 4(C+3/2+\varepsilon).
\end{aligned}
\]
Letting $\varepsilon\to0$ and
$C\downarrow\max\{d_{\mathcal E}(\phi(E),E),d_{\mathcal E}(\psi(E),E)\}$
gives \eqref{eq-estimate}.
\end{proof}

\medskip

\begin{proof}[Proof of Theorem \ref{thm:autonomous} assuming Theorem \ref{thm:equators}]
Recall that $\Ham(S^2)$ acts by isometries on the space of equators.
For every Hamiltonian diffeomorphism $g$ and equator $E$ define its asymptotic
displacement $$\ell(g) = \lim_{n \to +\infty} \frac{d_{\mathcal E}(g^n(E),E)}{n}\;.$$
Clearly $\ell$ does not depend on the choice of $E$, is conjugation invariant, is homogeneous in the sense that
$\ell(g^k) = |k| \ell (g)$, and $\ell$ is $1$-Lipshitz with respect to the Hofer metric.
By \cite{BFPS}, every autonomous diffeomorphism after a conjugation lies at a uniformly bounded distance from one which preserves the standard equator. Thus $\ell$ vanishes on all autonomous diffeomorphisms. At the same time  Theorem \ref{thm:equators} provides a Hamiltonian diffeomorphism $f$ with $\ell(f) > c >0$. Thus $\ell(f^n) > cn$.
It follows that the distance from $f^n$ to $\Aut(S^2)$ is $ \geq cn$,
as required.
\end{proof}


\section{Input from Floer theory}\label{sec:floer}

Consider the unit sphere carrying normalized
round area, and use coordinates $(x,y,z)$. Set
\begin{equation}\label{eq:cap-disks}
c_0=\frac1{400},\qquad
D_{\sigma,\tau}=\{\sigma x>0,\ \tau z>0,\ |xz|\ge c_0\},
\qquad\sigma,\tau\in\{\pm1\}.
\end{equation}
These four smooth closed disks have equal area $q$, and their
complement has area $\epsilon=1-4q$. Put $\eta=(q-\epsilon)/6>0$.
Label the disks $D_1,\ldots,D_4$, and put $L_i=\partial D_i$ and
$L=\bigcup_i L_i$.

We use the Lagrangian
Floer cohomology constructed in \cite[\S6]{CHMSS22} with the trivial rank-one local system
on the Lagrangian torus $T =\Sym(L)$ lying in the symmetric product $X=\Sym^4(S^2)\cong\mathbb{CP}^4$. This is Floer cohomology in
the smooth symmetric product, with Novikov exponents modified by intersection with
the diagonal $\Delta \subset X$. We recall its nonvanishing and
specify the conventions for forward Hamiltonian chords and their cappings that will
be used below. The Hamiltonian convention in \cite{CHMSS22} is opposite to ours:
a Hamiltonian $K$ here corresponds to $-K$ there, with the same Hamiltonian vector field.
We use caps from a constant reference to the chord, and take inputs at
$s=-\infty$ and outputs at $s=+\infty$. We explain below how this convention
is related by duality to the cited Floer complexes.

For clarity, write
\[
Y=X\setminus\Delta,\qquad
T=\{\{p_1,p_2,p_3,p_4\}:p_i\in L_i\}\subset Y.
\]
A configuration chord is an unordered tuple of four surface trajectories
whose initial and terminal configurations lie on $T$. The key estimate is
the following; its proof occupies this section.
\begin{lemma}[Strip Lemma]\label{lem:strip}
Let $H_0,H_1:[0,1]\times S^2\to\mathbb R$ be smooth Hamiltonians.
For every $\nu>0$, after arbitrarily small smooth perturbations needed
for transversality, there are a configuration chord for each Hamiltonian
and an actual Floer continuation strip $u$ joining them such that
\begin{equation}\label{eq:strip-main}
\eta(u\cdot\Delta)\le4\norm{H_0-H_1}_{(1,\infty)}+\nu.
\end{equation}
Both boundary components of $u$ lie on $T$, and $u\cdot\Delta\ge0$.
\end{lemma}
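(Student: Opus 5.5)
The approach is the standard spectral-invariant argument in the Floer theory of $T=\Sym(L)\subset X$ from \cite[\S6]{CHMSS22}, with the action functional corrected by the diagonal. For a Hamiltonian $H$ on $S^2$, let $\Sym H$ be the induced Hamiltonian on $X$, given by $\Sym H_t(\{p_i\})=\sum_i H_t(p_i)$. Its chords from $T$ to $T$ are exactly the configuration chords. For a capped chord $(\gamma,w)$ we use the modified action
\[
\mathcal A_H(\gamma,w)=\int_0^1 \Sym H_t(\gamma(t))\,dt-\int w^*\omega_X+\eta\,(w\cdot\Delta),
\]
where the constant $\eta$ comes from the bulk/diagonal correction. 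The proof of \cite{CHMSS22} shows that $HF(T)$ with the trivial local system is nonzero. This is precisely what the geometry of $D_{\sigma,\tau}$ (equal areas $q$, leftover area $\epsilon$) is arranged to give: $T$ is monotone in the modified sense, with $\eta=(q-\epsilon)/6$ playing the role of the monotonicity correction. Nonvanishing gives a spectral invariant $c(H)=c([T],H)$ attained by a capped chord for nondegenerate (small perturbations of) $H$.

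Given $H_0,H_1$, we consider the continuation map $HF(T;H_0)\to HF(T;H_1)$ built from a monotone interpolation $K_s=H_0+\beta(s)(H_1-H_0)$. The standard energy identity for a continuation strip $u$ from $(\gamma_0,w_0)$ to $(\gamma_1,w_1)$, capped consistently so that $w_1=w_0\#u$, reads
\[
0\le E(u)=\mathcal A_{H_0}(\gamma_0,w_0)-\mathcal A_{H_1}(\gamma_1,w_1)+\int\!\!\int\beta'(s)\,\Sym(H_1-H_0)\,ds\,dt+\eta\,(u\cdot\Delta).
\]
The sign conventions will be fixed so that the $\Delta$-term enters with the correct sign once inputs are at $s=-\infty$ and the duality with the cited complexes is taken into account. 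Since $\Sym$ multiplies oscillation by at most $4$, the curvature term is bounded by $4\int_0^1\max_{S^2}(H_1-H_0)\,dt$.

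The key step is to choose the pair of chords and the strip. We take a chord $\gamma_0$ representing $c(H_0)$ and, using that the continuation map is an isomorphism on nonzero homology, extract a chain-level continuation strip $u$ whose output $\gamma_1$ lies in a cycle representing $c(H_1)$ up to $\nu$. Then the spectral inequalities $|c(H_0)-c(H_1)|\le 4\norm{H_0-H_1}_{(1,\infty)}$ (after normalizing the mean, or using the one-sided max/min version run in both directions) combine with the energy identity to trap $\eta(u\cdot\Delta)$. The term $u\cdot\Delta$ is nonnegative by positivity of intersections, since $u$ is $J$-holomorphic for $J=\Sym j$ near $\Delta$, where $\Delta$ is a complex hypersurface, as in \cite{CHMSS22}. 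The identity then gives $\eta(u\cdot\Delta)\le \mathcal A_{H_1}-\mathcal A_{H_0}+4\int\max(H_0-H_1)$. We bound the action difference by the spectral values, namely $c(H_1)-c(H_0)+\nu$ when the strip connects spectral representatives. Lipschitz continuity of $c$ then yields $\eta(u\cdot\Delta)\le 4\norm{H_0-H_1}_{(1,\infty)}+\nu$ up to splitting the oscillation correctly between $\max$ and $\min$.

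The main obstacle is this selection step. We need an actual strip between two chords that are both near-spectral, not merely an inequality of spectral numbers. We would handle it with min-max: take a cycle $\alpha_0$ for $H_0$ with top action at most $c(H_0)+\nu/2$. Its image under continuation is a cycle representing the fundamental class, so its maximal-action generator $\gamma_1$ has action at least $c(H_1)$. Since that generator appears with nonzero coefficient, some strip $u$ runs from a generator $\gamma_0$ of $\alpha_0$ (with action at most $c(H_0)+\nu/2$) to $\gamma_1$. With this choice, the action difference is directly controlled and no separate Lipschitz estimate is needed. A secondary difficulty is bookkeeping the cap conventions and the sign of the $\Delta$-term under the stated reversal $K\leftrightarrow -K$ and the duality. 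Transversality is achieved by small perturbations of $H_i$ and $J$ away from $\Delta$, which only costs $\nu$.
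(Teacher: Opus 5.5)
Your architecture is the paper's: filtered continuation, a near-optimal input cycle, the top-action monomial of its continuation image, and the energy identity with a diagonal term. However, the inequality that carries the lemma has the wrong sign, and as written the chain does not close.

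The relative sign of the area and diagonal terms in the action is not a convention. The Novikov weight of a class $\mathfrak d$ is $\mathsf t^{\omega_{\mathcal V}(\mathfrak d)+\eta(\mathfrak d\cdot\Delta)}$, so the capped action must be $\int_0^1K_t(x(t))\,dt-\omega_{\mathcal V}(\widehat x)-\eta(\widehat x\cdot\Delta)$. With your $+\eta(w\cdot\Delta)$, the action changes along a Floer strip of a fixed Hamiltonian by $-E(u)+\eta(u\cdot\Delta)$, so it does not filter the complex. Accordingly, in your identity $E(u)=\dots+\eta(u\cdot\Delta)$, positivity of $E(u)$ bounds $\eta(u\cdot\Delta)$ only from below. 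Your claimed upper bound $\eta(u\cdot\Delta)\le\mathcal A_{H_1}-\mathcal A_{H_0}+4\int\max(H_0-H_1)$ does not follow. Nor could it be combined with your own selection: $\mathcal A_{H_1}(\gamma_1)\ge c(H_1)$ and $\mathcal A_{H_0}(\gamma_0)\le c(H_0)+\nu/2$ give a lower bound on $\mathcal A_{H_1}(\gamma_1)-\mathcal A_{H_0}(\gamma_0)$, not an upper one.

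The correct identity is
\[
\mathcal A^\eta(\mathrm{in})-\mathcal A^\eta(\mathrm{out})+\int_{\mathbb R\times[0,1]}(\partial_sK^s_t)(u)\,ds\,dt=E(u)+\eta(u\cdot\Delta).
\]
Hence $\eta(u\cdot\Delta)\le\kappa_++\mathcal A^\eta(\mathrm{in})-\mathcal A^\eta(\mathrm{out})$, where $\kappa_+$ and $\kappa_-$ are the time integrals of $\max_X$ and $\min_X$, respectively, of $K_{\mathrm{out}}-K_{\mathrm{in}}$. This is exactly what your last-paragraph selection controls. It gives $\eta(u\cdot\Delta)\le\kappa_++c(\mathrm{in})-c(\mathrm{out})+\nu/2$, up to regularization errors. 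Contrary to your final remark, you then still need the reverse continuation map. Its filtration shift gives $c(\mathrm{in})-c(\mathrm{out})\le-\kappa_-+\delta$, and only this converts $\kappa_+$ into the oscillation $\kappa_+-\kappa_-$. This is the paper's two-sided bound \eqref{eq:restored-3-11}, established before the strip is selected.

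Second, your justification of $u\cdot\Delta\ge0$ fails for $\Sym H$. The continuation equation is holomorphic near $\Delta$ only if $X_{K^s_t}$ vanishes there. Moreover, $\Sym H$ is not even smooth on $X$ along $\Delta$, where the symplectic form is also smoothed. The paper instead uses the compatible Hamiltonians $K_i=\chi\Sym H_i+(1-\chi)\,4\overline H_i$. Here $\overline H_i(t)$ is the surface average, and one common cutoff $\chi$ vanishes near $\Delta$ and equals $1$ near both moving tori $\bigcup_t\Sym(\phi^t_{H_i})(T)$. This makes the strips holomorphic near $\Delta$. It also creates no new chords from $T$, so the asymptotics are still configuration chords. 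Finally, it keeps the budget: $K_0-K_1$ is pointwise a convex combination of $\Sym(H_0-H_1)$ and $4(\overline H_0-\overline H_1)$, whence $\int_0^1\operatorname{osc}_X(K_{0,t}-K_{1,t})\,dt\le4\norm{H_0-H_1}_{(1,\infty)}$.
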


\subsection{The relative action and nonvanishing}
Choose a surface complex structure as in \cite[Theorem~5.11]{CHMSS22}, and use the induced
complex structure on $X$. The big diagonal
\[
 \Delta=\bigl\{\{p_1,p_2,p_3,p_4\}\in X:p_i=p_j\text{ for some }i\ne j\bigr\}
\]
is the discriminant hypersurface: under the identification with $\mathbb{CP}^4$ by
coefficients of homogeneous polynomials of degree four, it consists of polynomials
with a repeated root. Equip $X$ with a Perutz smoothing $\omega_{\mathcal V}$ of the symplectic
form induced from the product. It agrees with that form outside a small neighborhood
of $\Delta$, is compatible with the complex structure near $\Delta$, and has line area
one. The torus $T$ is disjoint from the smoothing region. The smoothing concerns the
symplectic form; $\Delta$ remains the discriminant hypersurface.

For a relative disk class $\mathfrak d\in\pi_2(X,T)$, let $\mu(\mathfrak d)$ denote its Maslov index and set
\begin{equation}\label{eq:restored-3-1}
 \omega_\eta(\mathfrak d):=\omega_{\mathcal V}(\mathfrak d)+\eta(\mathfrak d\cdot\Delta)=\frac q2\mu(\mathfrak d).
\end{equation}
The equality appears in \cite[Lemma~4.21]{CHMSS22}. Its hypothesis is satisfied because the four
disk regions have area $q$ and one boundary component, while the complementary
region has area $\epsilon$ and four boundary components, and
\[
 \epsilon+2\eta(4-1)=q.
\]
The modification in this Floer theory is in the coefficients: the Novikov weight of
a disk class $\mathfrak d$ is
\[
 \mathsf t^{\omega_\eta(\mathfrak d)}=\mathsf t^{\omega_{\mathcal V}(\mathfrak d)+\eta(\mathfrak d\cdot\Delta)}.
\]
Thus the modified area enters the Novikov exponent. The strips themselves solve the
usual Floer equation on the smooth symplectic manifold $(X,\omega_{\mathcal V})$, with almost
complex structures equal to the prescribed complex structure near $\Delta$ and
Hamiltonians spatially constant there. Since $T$ is orientable, its Maslov indices are
even. We set
\[
 \zeta=\mathsf t^q,\qquad\Lambda=\mathbb C((\zeta));
\]
recapping by a class of modified area $kq$ is recorded by $\zeta^k$. In particular, a
Maslov-index-two disk has weight $\zeta$.

Use the spin structure and orientations of \cite[Theorem~5.11]{CHMSS22}. If
$x_i\in\mathbb C^*$ is the holonomy of a rank-one local system around the $i$th
coordinate circle of $T$, the disk potential, after removing the common Novikov
factor $\zeta$, is
\begin{equation}\label{eq:restored-3-2}
 \mathcal W(x_1,x_2,x_3,x_4)=x_1+x_2+x_3+x_4+(x_1x_2x_3x_4)^{-1}.
\end{equation}
The trivial local system corresponds to $(1,1,1,1)$, which is a critical point of
$\mathcal W$. Denote by $HF_\eta(T;\Lambda)$ the self-Floer cohomology of $T$ with
this local system and these modified Novikov weights. By \cite[Lemma~6.11]{CHMSS22},
\begin{equation}\label{eq:restored-3-3}
 HF_\eta(T;\Lambda)\cong H^*(T;\Lambda)\ne0
\end{equation}
as $\Lambda$-vector spaces. The grading may be taken modulo two. This is the
nonvanishing result used in \cite[Definition~6.14]{CHMSS22} to define spectral invariants.

\subsection{Compatible Hamiltonians and forward chords}\label{sec:compatible}
For a surface Hamiltonian $H$, write
\[
 \Sym(H_t)(\{p_1,\ldots,p_4\})=\sum_{i=1}^4 H_t(p_i).
\]
A smooth Hamiltonian $K$ on $X$ is \emph{compatible} with $H$ if it agrees with
$\Sym(H)$ outside a neighborhood $\mathcal V$ of $\Delta$ disjoint from the whole moving torus
\[
 M_H=\bigcup_{t\in[0,1]}\Sym(\phi_H^t)(T),
\]
and is spatially constant on a smaller neighborhood of $\Delta$. The support of the
smoothing of the symplectic form is chosen inside this smaller neighborhood. This is
the endpoint class used in \cite[(47)--(48)]{CHMSS22}.

The chords are the actual forward trajectories
\begin{equation}\label{eq:restored-3-4}
 x(t)=\phi_K^t(x(0)),\qquad x(0),x(1)\in T.
\end{equation}

The vector fields of $K$ and $\Sym(H)$ agree near $M_H$, so uniqueness of solutions implies
that every such trajectory starting on $T$ is the corresponding unordered tuple of surface
trajectories. In particular, the cutoff creates no extra chords from $T$. Nondegeneracy means
$\phi_K^1(T)\pitchfork T$; for compatible Hamiltonians it is the transversality of
$\Sym(\phi_H^1)(T)$ and $T$.

Fix $P\in T$, independently of $K$. This point is a reference for cappings; the starting point
$x(0)$ of a chord may be any point of $T$. A cap of a chord is a map
$\widehat x:[0,1]^2\to X$, oriented by $ds\wedge dt$, with
\[
\widehat x(0,t)=P,\qquad \widehat x(1,t)=x(t),\qquad
\widehat x(s,0),\widehat x(s,1)\in T.
\]
Thus a cap is a homotopy from the constant path at $P$ to the chord, with both endpoints
allowed to move in $T$. It is not required to satisfy a Floer equation. Such caps exist since $X$
is simply connected and $T$ is connected. Identify caps of the same chord whose difference
has modified area zero and complete over $\Lambda$. With our Hamiltonian convention
and the stated orientation of caps, the ordinary action and its divisor-weighted version are
\begin{equation}
\begin{aligned}
\mathcal A_K(\widehat x)
&=\int_0^1 K_t(x(t))\,dt-\int_{[0,1]^2}\widehat x^*\omega_{\mathcal V},\\
\mathcal A_K^\eta(\widehat x)
&=\mathcal A_K(\widehat x)-\eta(\widehat x\cdot\Delta).
\end{aligned}
\label{eq:restored-3-5}
\end{equation}
The filtration below uses $\mathcal A_K^\eta$; the diagonal term records the
modified Novikov weights.
The period group is $q\mathbb Z$ by \eqref{eq:restored-3-1}. For nondegenerate $K$ the resulting complex, denoted by
$CF(K;T)$, has finite rank over $\Lambda$. We suppress $\eta$ and $\Lambda$ in this notation and write
$HF(K;T)$ for its cohomology. Multiplication by $\zeta$ lowers the action by $q$. The filtration of a chain is
the maximum action of its nonzero monomials. The differential counts rigid solutions of the
Floer equation with boundary on $T$, with the input at $s=-\infty$ and the output at $s=+\infty$;
the output cap is obtained by appending the strip to the input cap.

We relate this convention to the Hamiltonian-chord description in
\cite[\S6.2, (49)--(54)]{CHMSS22}.
Its identification with the intersection-point description is the standard Hamiltonian
change of variables
\begin{equation}
v(s,t)=\mathcal F_t(u(s,t)),\qquad \mathcal F_t=\phi_K^1\circ(\phi_K^t)^{-1}.
\label{eq:restored-3-6}
\end{equation}
It identifies a chord $x$ with the intersection point $x(1)\in\phi_K^1(T)\cap T$, and transforms the
Floer equation into the unperturbed Cauchy--Riemann equation with boundary conditions
$(\phi_K^1(T),T)$ and the transformed almost complex structures. Since $K$ is spatially constant
near $\Delta$, the maps $\mathcal F_t$ are the identity there and preserve diagonal intersection numbers. The
same change of variables applies to capped generators.
After translating the Hamiltonian sign, our caps have the opposite orientation
and our inputs and outputs are reversed relative to the cited convention.
Thus our finite-rank complex is the $\Lambda$-linear dual of the cited complex.
The Novikov powers are unchanged: if an old strip relation is
$c_{\rm out}^{\rm old}+\mathfrak d=u\#c_{\rm in}^{\rm old}$, then reversing
reference caps gives the new relation
$c_{\rm out}^{\rm new}+\mathfrak d=c_{\rm in}^{\rm new}\#u$.
Here $c_{\rm in}^{\rm new}=-c_{\rm out}^{\rm old}$ and
$c_{\rm out}^{\rm new}=-c_{\rm in}^{\rm old}$, since inputs and outputs are exchanged.
Both coefficients therefore have exponent $\omega_\eta(\mathfrak d)$;
there is no inversion of the Novikov variable. The same observation applies
to continuation maps, whose matrices are transposed. Consequently,
\cite[Lemma~6.11]{CHMSS22} and finite-dimensional duality give
\[
HF(K;T)\cong HF_\eta(T;\Lambda)^\vee\ne0,
\qquad (\cdot)^\vee=\operatorname{Hom}_\Lambda(\cdot,\Lambda).
\]
We continue to use cohomology notation, with grading modulo two.
All action comparisons below are made directly in the chord description, using the fixed
reference $P$ and \eqref{eq:restored-3-5} for every Hamiltonian.

\subsection{Continuation with a common reference}
Take two compatible, nondegenerate endpoints $K_0,K_1$, using a common symplectic form
and a common neighborhood of $\Delta$ on which both are spatially constant. Let
\begin{equation}
K^s=K_1+\beta(s)(K_0-K_1),\qquad
\beta'\ge0,\quad\beta(-\infty)=0,\quad\beta(+\infty)=1.
\label{eq:restored-3-7}
\end{equation}
The continuation map $\Phi_{01}:CF(K_1;T)\to CF(K_0;T)$ counts solutions of
\begin{equation}
\partial_su+J_{s,t}(\partial_tu-X_{K_t^s})=0,\qquad
u(s,0),u(s,1)\in T,
\label{eq:restored-3-8}
\end{equation}
with input $x_1$ at $s=-\infty$ and output $x_0$ at $s=+\infty$. All caps use the same constant
reference $P$, and the output cap is obtained by appending $u$ to the input cap. We compute
continuation directly in this model; \eqref{eq:restored-3-6} is used only for a fixed Hamiltonian.

The facts from the Floer construction that we need are the following.
\begin{enumerate}
\renewcommand{\labelenumi}{(\roman{enumi})}
\item For regular data equal to the prescribed complex structure near $\Delta$, the differentials
$d_{K_i}$ and the continuation maps are defined over $\Lambda$. For each endpoint $i=0,1$, the
Floer cohomology satisfies
\[
HF(K_i;T):=H^*(CF(K_i;T),d_{K_i})\cong HF_\eta(T;\Lambda)^\vee\ne0
\]
by \cite[Lemma~6.11]{CHMSS22} and the identification recalled in Section~\ref{sec:compatible}.
\item Continuation through Hamiltonians spatially constant on the common neighborhood of
$\Delta$ is invariant under homotopy. The reverse continuation map is inverse on cohomology.
\item The maps satisfy the weighted action estimate
\begin{equation}
\mathcal A_{K_0}^\eta(\widehat x_0)-\mathcal A_{K_1}^\eta(\widehat x_1)
\le \kappa_+-\eta r,\qquad
\kappa_+=\int_0^1\max_X(K_{0,t}-K_{1,t})\,dt,\quad r=u\cdot\Delta\ge0.
\label{eq:restored-3-9}
\end{equation}
Arbitrarily small regularizing perturbations of the homotopy add an arbitrarily small
error to $\kappa_+$.
\end{enumerate}
These follow from the continuation statements in \cite[\S6.2]{CHMSS22} by the duality
just described, with the diagonal term retained
in the energy computation of \cite[Lemma~6.15]{CHMSS22}. We now prove the action estimate
\eqref{eq:restored-3-9}.

Let $u$ be a solution of \eqref{eq:restored-3-8}, and put $r=u\cdot\Delta$. With our Hamiltonian convention, its
energy is
\[
\begin{aligned}
E(u)&=\int_{\mathbb R\times[0,1]}
\omega_{\mathcal V}(\partial_su,\partial_tu-X_{K_t^s})\,ds\,dt\\
&=\omega_{\mathcal V}(u)-\int_0^1K_{0,t}(x_0(t))\,dt
+\int_0^1K_{1,t}(x_1(t))\,dt
+\int_{\mathbb R\times[0,1]}(\partial_sK_t^s)(u)\,ds\,dt.
\end{aligned}
\]
Cap gluing therefore gives the exact identity
\begin{equation}
\mathcal A_{K_0}^\eta(\widehat x_0)-\mathcal A_{K_1}^\eta(\widehat x_1)
=-E(u)+\int_{\mathbb R\times[0,1]}(\partial_sK_t^s)(u)\,ds\,dt-\eta r.
\label{eq:restored-3-10}
\end{equation}
Near $\Delta$ the Hamiltonian vector field vanishes and the strip is holomorphic, which proves
$r\ge0$. For \eqref{eq:restored-3-7}, the middle term in \eqref{eq:restored-3-10} is at most $\kappa_+$, proving \eqref{eq:restored-3-9}.

\subsection{Key observation: strip with controlled diagonal intersection}
We prove Lemma~\ref{lem:strip}, starting with its formulation for compatible Hamiltonians on $X$.

\begin{proposition}\label{prop:selected-strip}
Let $K_0,K_1$ be compatible nondegenerate Hamiltonians as above. For
every $\nu>0$, one can choose regular continuation data such that the continuation map
$\Phi_{01}:CF(K_1;T)\to CF(K_0;T)$ counts a strip $u$ satisfying
\[
\eta(u\cdot\Delta)\le\int_0^1\operatorname{osc}_X(K_{0,t}-K_{1,t})\,dt+\nu.
\]
\end{proposition}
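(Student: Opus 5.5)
The approach is a spectral-invariant style argument. The continuation maps in both directions are used, and we select a strip whose action drop is controlled by comparing filtration levels of cohomology classes.

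\textbf{Step 1: the reverse map and its estimate.} Set $\kappa_+=\int_0^1\max_X(K_{0,t}-K_{1,t})\,dt$ and $\kappa_-=\int_0^1\max_X(K_{1,t}-K_{0,t})\,dt$. Then $\kappa_++\kappa_-=\int_0^1\operatorname{osc}_X(K_{0,t}-K_{1,t})\,dt$. Consider the reverse continuation map $\Phi_{10}:CF(K_0;T)\to CF(K_1;T)$. By (iii) with the roles exchanged, every strip $v$ counted by it satisfies
\[
\mathcal A^\eta_{K_1}(\widehat y_1)-\mathcal A^\eta_{K_0}(\widehat y_0)\le\kappa_-+\nu/2-\eta(v\cdot\Delta)\le\kappa_-+\nu/2.
\]

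\textbf{Step 2: spectral numbers for a fixed class.} By (i), choose a nonzero class $a\in HF(K_1;T)$. Since $CF(K_1;T)$ has finite rank over $\Lambda$, the spectral number
\[
c_1=\min\{\text{filtration of }\alpha:\ [\alpha]=a\}
\]
is attained. This minimum exists because the Novikov field is discretely valued with period group $q\mathbb Z$; it is finite because $a\neq0$. Pick a minimizing cocycle $\alpha$ and set $\beta=\Phi_{01}(\alpha)$. By (ii), $\Phi_{10}$ is inverse to $\Phi_{01}$ on cohomology, so $[\Phi_{10}\beta]=a$. Since $\Phi_{10}$ raises filtration by at most $\kappa_-+\nu/2$, we get
\[
c_1\le \operatorname{filt}(\Phi_{10}\beta)\le \operatorname{filt}(\beta)+\kappa_-+\nu/2.
\]
Hence $\operatorname{filt}(\beta)\ge c_1-\kappa_--\nu/2$.

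\textbf{Step 3: extracting the strip.} Take a monomial $\widehat x_0$ of $\beta$ realizing its filtration. Its coefficient is a signed count of strips from monomials $\widehat x_1$ of $\alpha$, and it is nonzero. So at least one rigid continuation strip $u$ counted by $\Phi_{01}$ joins some $\widehat x_1$, with $\mathcal A^\eta_{K_1}(\widehat x_1)\le c_1$, to $\widehat x_0$. The Novikov bookkeeping matters here: the exponent of the output cap already incorporates $\eta(u\cdot\Delta)$, so the action of $\widehat x_0$ is literally the output action appearing in \eqref{eq:restored-3-10}. Applying \eqref{eq:restored-3-9} to $u$ gives
\[
c_1-\kappa_--\nu/2\le \mathcal A^\eta_{K_0}(\widehat x_0)\le \mathcal A^\eta_{K_1}(\widehat x_1)+\kappa_++\nu/2-\eta(u\cdot\Delta)\le c_1+\kappa_++\nu/2-\eta(u\cdot\Delta).
\]
Rearranging yields $\eta(u\cdot\Delta)\le\kappa_++\kappa_-+\nu$, which is the claim.

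\textbf{Main obstacle.} The delicate point is the cancellation step. A nonzero coefficient of $\widehat x_0$ in $\beta$ must come from some counted strip, so we have to rule out the case where all contributions cancel. That is automatic, since a nonzero sum has a nonzero summand. What needs care is the filtration bookkeeping: different caps of the same chord differ by multiples of $\zeta$, so the monomial of top action in $\beta$ must be traced to a genuine strip with consistent caps. We must also check that the regular perturbations of both homotopies, chosen via (ii) and (iii), cost at most $\nu/2$ each. Finally, attainment of the minimum in $c_1$ uses the discreteness of the action spectrum, which holds because the period group is $q\mathbb Z$ and there are finitely many chords.
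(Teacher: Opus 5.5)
Your proposal is correct and is essentially the paper's argument: you use both continuation maps to bound from below the filtration of $\Phi_{01}$ applied to an optimal cocycle, trace a top-action monomial of the image back to a counted strip, and let the forward action estimate give $\eta(u\cdot\Delta)\le\kappa_++\kappa_-+\nu$ (with your sign convention for $\kappa_-$). The differences are cosmetic. You take an exactly minimizing cocycle, using the discreteness of the action values, and get the lower bound directly from the round trip $\Phi_{10}\Phi_{01}$; the paper instead uses a $\delta$-almost minimizer and compares the two spectral numbers $C_0$ and $C_1$.
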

\begin{proof}
Put
\[
\kappa_-=\int_0^1\min_X(K_{0,t}-K_{1,t})\,dt,\qquad
\kappa_+=\int_0^1\max_X(K_{0,t}-K_{1,t})\,dt.
\]
Fix $0<\delta<\nu/4$. Choose regular continuation maps
\[
\Phi_{01}:CF(K_1;T)\longrightarrow CF(K_0;T),\qquad
\Phi_{10}:CF(K_0;T)\longrightarrow CF(K_1;T)
\]
whose regularization errors are less than $\delta$. Thus every strip $u$ counted by $\Phi_{01}$ satisfies
\[
\mathcal A_{K_0}^\eta(\widehat x_0)-\mathcal A_{K_1}^\eta(\widehat x_1)
\le \kappa_++\delta-\eta(u\cdot\Delta),
\]
where the output cap is obtained by appending $u$ to the input cap. The reverse map raises the
filtration by at most $-\kappa_-+\delta$. These maps induce inverse isomorphisms on cohomology.

Choose $0\ne\alpha_1\in HF(K_1;T)$ and set $\alpha_0=(\Phi_{01})_*(\alpha_1)\ne0$.
Write $\lambda_{K_i}(\xi)$ for the filtration level of a chain $\xi\in CF(K_i;T)$, and let
\[
C_i=\inf\{\lambda_{K_i}(\xi):d_{K_i}\xi=0,\ [\xi]=\alpha_i\},\qquad i=0,1.
\]
These numbers are finite: each complex has finite rank over the complete discretely valued
field $\Lambda$, and its boundary subspace is closed. The filtration bounds for the two continuation
maps give
\[
C_0\le C_1+\kappa_++\delta,\qquad C_1\le C_0-\kappa_-+\delta.
\]
Consequently,
\begin{equation}
\kappa_--\delta\le C_0-C_1\le \kappa_++\delta.
\label{eq:restored-3-11}
\end{equation}
By the definition of $C_1$, choose a cycle $\xi_1$ representing $\alpha_1$ such that
$\lambda_{K_1}(\xi_1)\le C_1+\delta$. Its continuation image $\xi_0=\Phi_{01}(\xi_1)$ represents
$\alpha_0$, so $\lambda_{K_0}(\xi_0)\ge C_0$. The maximum defining $\lambda_{K_0}(\xi_0)$
is attained. Hence $\xi_0$ contains a capped-chord monomial $\widehat x_0$ with nonzero coefficient and
\[
\mathcal A_{K_0}^\eta(\widehat x_0)\ge C_0.
\]
We now choose a strip counted in this coefficient. If an input monomial $\widehat x_1$ contributes to
it, the continuation estimate and $u\cdot\Delta\ge0$ imply
\[
\mathcal A_{K_1}^\eta(\widehat x_1)
\ge\mathcal A_{K_0}^\eta(\widehat x_0)-\kappa_+-\delta.
\]
The Novikov condition implies that only finitely many monomials of $\xi_1$ satisfy this lower
bound. For each such input monomial, the coefficient of $\widehat x_0$ in its continuation image is a
finite signed count of strips. Since the coefficient of $\widehat x_0$ in $\xi_0$ is nonzero, there is an input
monomial $\widehat x_1$ occurring in $\xi_1$ and a strip $u$ counted by $\Phi_{01}$ joining these capped chords.

For this input monomial, $\mathcal A_{K_1}^\eta(\widehat x_1)\le C_1+\delta$.
Applying \eqref{eq:restored-3-9}, with regularization error bounded by $\delta$, to the chosen strip gives
\[
\begin{aligned}
\eta(u\cdot\Delta)
&\le \kappa_++\delta+\mathcal A_{K_1}^\eta(\widehat x_1)-\mathcal A_{K_0}^\eta(\widehat x_0)\\
&\le \kappa_++C_1-C_0+2\delta\\
&\le \kappa_+-\kappa_-+3\delta\\
&<\int_0^1\operatorname{osc}_X(K_{0,t}-K_{1,t})\,dt+\nu.
\end{aligned}
\]
This proves the proposition.
\end{proof}

Here is the resulting estimate for surface Hamiltonians. Given $H_0,H_1$, first make
arbitrarily small surface perturbations if needed to achieve chord nondegeneracy.

Recall from Section~\ref{sec:compatible} that
\[
M_{H_i}=\bigcup_{t\in[0,1]}\Sym(\phi_{H_i}^t)(T),\qquad i=0,1.
\]
These compact sets are disjoint from $\Delta$. Choose neighborhoods $\overline{\mathcal U}\subset \mathcal V$ of $\Delta$ disjoint from
$M_{H_0}\cup M_{H_1}$, with the smoothing supported in $\mathcal U$.

Let $\chi$ be zero near $\overline{\mathcal U}$ and one outside $\mathcal V$. Set
\begin{equation}
K_{i,t}=\chi\Sym(H_{i,t})+(1-\chi)4\overline H_i(t),\qquad
\overline H_i(t)=\int_{S^2}H_{i,t}\,\omega.
\label{eq:restored-3-12}
\end{equation}
These Hamiltonians are compatible and have a common constant region near $\Delta$. Their
difference is a convex combination of $\Sym(H_0-H_1)$ and four times the surface average.
Consequently
\begin{equation}
\int_0^1\operatorname{osc}_X(K_{0,t}-K_{1,t})\,dt
\le4\norm{H_0-H_1}_{(1,\infty)}.
\label{eq:restored-3-13}
\end{equation}
Proposition~\ref{prop:selected-strip} supplies an actual strip between physical surface chords with
\begin{equation}
\eta r\le4\norm{H_0-H_1}_{(1,\infty)}+\nu.
\label{eq:restored-3-14}
\end{equation}
For degenerate endpoints we use strips for the perturbed surface paths and pass only
numerical inequalities to the limit. In particular, no convergence to a strip for degenerate
data is required. This proves Lemma~\ref{lem:strip}.

\section{Braids, half twists, and the comparison estimate}\label{sec:braid-topology}
A crucial feature of our approach is as follows: we view Hamiltonian trajectories with the endpoints
on the symmetric product of our circles $L_i$, $i=1,2,3,4$ as a braid on $S^2$. Roughly speaking, the Floer-theoretic
Lemma ~\ref{lem:strip} enables us to compare algebraically such a braids corresponding to different Hamiltonians.

Let $Y=X\setminus\Delta$ be the unordered configuration space of four points, and fix
$P=\{p_1,p_2,p_3,p_4\}\in T$, with $p_i\in L_i$.
The inclusion $\pi_1(T,P)\to\pi_1(Y,P)$ is zero: each coordinate
generator contracts through its disk while avoiding the other three points. Consequently, a
path $x:[0,1]\to Y$ with endpoints on $T$ determines a braid
\[
\beta_x=[\gamma_0\mathbin\sharp x\mathbin\sharp\gamma_1]\in\pi_1(Y,P)=B_4(S^2),
\]
where $\gamma_0,\gamma_1$ are paths in $T$ from $P$ to $x(0)$ and from $x(1)$ to $P$, respectively. This braid is
independent of the closing paths and invariant under homotopies of $x$ in $Y$ with endpoints
on $T$. Here $\sharp$ denotes concatenation.

The evaluation map
\[
\operatorname{ev}:B_4(S^2)\longrightarrow\Mod(S^2,P),\qquad
\operatorname{ev}([\beta])=[\Psi_1],
\]
is defined by an ambient isotopy satisfying $\Psi_0=\mathrm{id}$ and $\Psi_t(P)=\beta(t)$.
It is well-defined and is an antihomomorphism with our convention.

For a Hamiltonian chord $x(t)=\phi_t(x(0))$, the mapping class
$\operatorname{ev}(\beta_x)$ depends only on $\phi_1$ and $x(0)$:
changing the isotopy inserts a braid induced by a loop of ambient diffeomorphisms, whose
evaluation is the identity.

Recall that a half twist exchanges two marked points by a half turn along an embedded
arc joining them and disjoint from the other two points. Small meridians of the smooth
part of $\Delta$ represent half twists.

\begin{lemma}\label{lem:meridian}
Let $K_0,K_1$ satisfy the hypotheses of Proposition~\ref{prop:selected-strip}.
For every $\nu>0$, there
is a strip $u$ counted by a continuation map from $CF(K_1;T)$ to $CF(K_0;T)$, with asymptotic
chords $x_0,x_1$, such that, writing $r=u\cdot\Delta$,
\[
\eta r\le\int_0^1\operatorname{osc}_X(K_{0,t}-K_{1,t})\,dt+\nu,
\]
and $\beta_{x_1}\beta_{x_0}^{-1}$ is a product of $r$ conjugates of inverse half twists.
\end{lemma}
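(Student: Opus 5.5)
The plan is to take the strip $u$ produced by Proposition~\ref{prop:selected-strip} and read off the braid relation from its topology. The estimate $\eta r\le\int_0^1\operatorname{osc}_X(K_{0,t}-K_{1,t})\,dt+\nu$ is then exactly the conclusion of that proposition. So the only new content is the algebraic statement that $\beta_{x_1}\beta_{x_0}^{-1}$ is a product of $r$ conjugates of inverse half twists.

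First, put $u$ in general position with respect to $\Delta$. Near $\Delta$ the continuation Hamiltonians are spatially constant and $J$ equals the integrable structure, so $u$ is holomorphic there. Its intersections with $\Delta$ are therefore isolated and positive. After an arbitrarily small perturbation away from $T$, we may also assume they are transverse points on the smooth part of $\Delta$ (the discriminant's singular locus has complex codimension two in $X$, hence codimension at least one inside $\Delta$). Positivity of intersections then gives exactly $r$ points $z_1,\dots,z_r$ in the interior of the strip, each of local multiplicity one.

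Next, compare boundary paths in $Y$. Truncate the strip to $[-R,R]\times[0,1]$. The boundary loop of the truncated square consists of four pieces:
\begin{itemize}
\item $u(-R,\cdot)$, which approximates $x_1$;
\item $u(\cdot,1)$, which lies in $T$;
\item $u(R,\cdot)$ reversed, which approximates $x_0$;
\item $u(\cdot,0)$ reversed, which lies in $T$.
\end{itemize}
Closing the chords by paths in $T$ to $P$, this loop represents $\beta_{x_1}\cdot(\text{path in }T)\cdot\beta_{x_0}^{-1}\cdot(\text{path in }T)$ in $\pi_1(Y,P)$. Because $\pi_1(T)\to\pi_1(Y)$ is trivial, the paths in $T$ contribute nothing. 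Hence the loop represents $\beta_{x_1}\beta_{x_0}^{-1}$, up to the convention on which side the inverse sits; I will fix conventions so that it appears in this order.

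On the other hand, the square minus small disks around the $z_j$ shows that this boundary loop is homotopic in $Y$ to a product of the loops $u(\partial D(z_j))$, each conjugated by a path from the basepoint. Each such loop is a positively oriented small meridian of the smooth part of $\Delta$, by transversality and the complex orientation. A small meridian represents a half twist.

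The main obstacle is the sign: showing that the orientation conventions force each meridian to be an \emph{inverse} half twist. This requires tracking three things together:
\begin{itemize}
\item the orientation of the strip ($ds\wedge dt$ versus the complex orientation of the domain $s+it$);
\item the direction of traversal of the boundary, with input at $s=-\infty$;
\item the identification of the complex-positive meridian of the discriminant with a positive half twist, with the antihomomorphism convention for $\operatorname{ev}$ in the background.
\end{itemize}
I would settle this with a local model. Two points $p,p'$ on $\mathbb C$ near collision have discriminant coordinate $w=(p-p')^2$. The positive loop $w=\varepsilon e^{i\theta}$ rotates the pair by a half turn counterclockwise. One then checks that the boundary of the strip, oriented as above, traverses each meridian in the negative sense relative to $\beta_{x_1}\beta_{x_0}^{-1}$. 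This yields $r$ conjugates of inverse half twists, and degenerate or perturbed cases pass through, as in Lemma~\ref{lem:strip}.
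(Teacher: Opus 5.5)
Your proposal is correct and follows essentially the same route as the paper. You take the strip from Proposition~\ref{prop:selected-strip}, resolve its positive intersections with $\Delta$ into $r$ transverse points on the smooth part, and read off the braid relation from the boundary of the punctured domain; the closing paths in $T$ are irrelevant because $\pi_1(T)\to\pi_1(Y)$ is zero. The sign you flag as the main obstacle is immediate. Since $\partial_s u+J\partial_t u=0$ is holomorphicity for $z=s+it$, the orientation $ds\wedge dt$ is the complex one, and the loop you list (left edge up, top, right edge down, bottom) is the \emph{clockwise} boundary. It is therefore the inverse of the positively oriented boundary $\beta_{x_0}\beta_{x_1}^{-1}$, which is a product of conjugates of positive meridians, and inverting gives inverse half twists, which is exactly how the paper concludes.
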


\begin{proof}
Choose the strip supplied by Proposition~\ref{prop:selected-strip}. Its intersections with $\Delta$ have positive
multiplicities. Resolve them into $r$ simple intersections with the smooth part of $\Delta$, and
remove small disks around their preimages. With orientation $ds\wedge dt$ and
output at $s=+\infty$, the positively oriented outer boundary represents
$\beta_{x_0}\beta_{x_1}^{-1}$, up to conjugation. It is a product of $r$
conjugates of positive meridians. Inverting this boundary relation gives the
asserted factorization of $\beta_{x_1}\beta_{x_0}^{-1}$ into conjugates of inverse
half twists. The closing paths in $T$ cause no ambiguity because
$\pi_1(T)\to\pi_1(Y)$ is zero. This factorization argument applies to every
continuation strip.
\end{proof}

For the direct proof of Equators Theorem \ref{thm:equators} in Section \ref{sec:direct-equators}
we need the following consequence of this lemma. Given a homogeneous quasimorphism
$\overline Q:\Mod(S^2,P)\to\mathbb R$ of defect at most $D$, put
\[
Q=\overline Q\circ\operatorname{ev}:B_4(S^2)\to\mathbb R.
\]
Then $Q$ is also a homogeneous quasimorphism of defect at most $D$.

\begin{corollary} If a homogeneous quasimorphism $Q$ vanishes on half twists, then
\begin{equation}
|Q(\beta_{x_0})-Q(\beta_{x_1})|
\le Dr\le\frac D\eta\left(\int_0^1\operatorname{osc}_X(K_{0,t}-K_{1,t})\,dt+\nu\right).
\label{eq:restored-4-1}
\end{equation}
\end{corollary}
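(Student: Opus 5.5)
The plan is to combine Lemma~\ref{lem:meridian} with the elementary behaviour of homogeneous quasimorphisms on products of conjugates. First apply Lemma~\ref{lem:meridian} to $K_0,K_1$ and the given $\nu$. This produces a counted continuation strip $u$ with asymptotic chords $x_0,x_1$ and $r=u\cdot\Delta$ satisfying $\eta r\le\int_0^1\operatorname{osc}_X(K_{0,t}-K_{1,t})\,dt+\nu$. In addition,
\[
\beta_{x_1}\beta_{x_0}^{-1}=\prod_{i=1}^r g_i h_i^{-1}g_i^{-1},
\]
where each $h_i$ is a half twist and $g_i\in B_4(S^2)$. The second inequality in \eqref{eq:restored-4-1} then follows at once by multiplying this bound by $D/\eta$. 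So the real content is the first inequality $|Q(\beta_{x_0})-Q(\beta_{x_1})|\le Dr$.

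For that inequality I would use three standard facts about a homogeneous quasimorphism $Q$ of defect at most $D$:
\begin{itemize}
\item $Q$ is conjugation invariant;
\item $Q(h^{-1})=-Q(h)$;
\item $|Q(ab)-Q(a)-Q(b)|\le D$ for all $a,b$.
\end{itemize}
Since $Q$ vanishes on half twists, each factor satisfies $Q(g_ih_i^{-1}g_i^{-1})=-Q(h_i)=0$. Write $\beta_{x_1}=w\,\beta_{x_0}$ with $w$ the product of the $r$ factors above, and peel off one factor at a time. Each step changes $Q$ by at most the value of $Q$ on the factor (which is zero) plus one defect $D$. After $r$ steps this gives $|Q(\beta_{x_1})-Q(\beta_{x_0})|\le rD$. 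The case $r=0$ is trivial, since then $\beta_{x_1}=\beta_{x_0}$.

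The point to watch is the count of defects. The naive estimate for a product of $r$ conjugates together with $\beta_{x_0}$ has $r+1$ terms and so costs $r$ defects. That matches exactly $Dr$, provided I multiply the factors into $\beta_{x_0}$ sequentially, with one defect per factor, rather than first bounding $Q(w)$ by $(r-1)D$ and then paying an extra $D$ for the product $w\beta_{x_0}$.

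A second check is that $Q=\overline Q\circ\operatorname{ev}$ is a homogeneous quasimorphism of defect at most $D$ even though $\operatorname{ev}$ is an antihomomorphism. I would verify this via $\overline Q(ab)$ versus $\overline Q(ba)$: the defect inequality is symmetric in the order of the product, and homogeneity is preserved because $\operatorname{ev}(\beta^n)=\operatorname{ev}(\beta)^n$. I do not expect a genuine obstacle beyond keeping this bookkeeping straight.
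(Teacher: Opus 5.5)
Your argument is correct and is essentially the paper's proof: you apply Lemma~\ref{lem:meridian}, use homogeneity and conjugation invariance to see that $Q$ vanishes on conjugates of inverse half twists, and accumulate $r$ defects. Your concern about the defect count is unfounded, though, since the paper's route also gives exactly $rD$: it bounds $|Q(\beta_{x_1}\beta_{x_0}^{-1})|\le(r-1)D$ and then pays one more defect for the product with $\beta_{x_0}$, for a total of $(r-1)D+D=rD$.
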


\begin{proof}
Homogeneity implies that $Q$ also vanishes on inverse half twists.
For $r\ge1$, conjugation invariance and the quasimorphism inequality give
\[
|Q(\beta_{x_1}\beta_{x_0}^{-1})|\le(r-1)D.
\]
One further application of the quasimorphism inequality gives \eqref{eq:restored-4-1}. If $r=0$, the two braids
coincide.
\end{proof}

If $\phi$ fixes $T$ pointwise, the chords of any fixed isotopy from $\mathrm{id}$ to $\phi$ form a continuous
family parametrized by $T$, and hence determine the same braid $\beta_\phi$. Its evaluation is the
mapping class $[\phi]_P\in\Mod(S^2,P)$. For the cap-fixing maps constructed above, moving the
marked points into the disks identifies this class with the relative mapping class in \eqref{eq:torus-quotient}.
Homogeneity gives
\begin{equation}
Q(\beta_{\phi^n})=nQ(\beta_\phi),\qquad n\in\mathbb Z.
\label{eq:restored-4-2}
\end{equation}
Although $\beta_\phi$ may depend on the chosen isotopy, $Q(\beta_\phi)$ does not.

We also record the stability needed for degenerate endpoints. For a fixed smooth surface
Hamiltonian $H$, every chord of a sufficiently small smooth perturbation of $H$ determines the
same braid as some chord of $H$. Indeed, otherwise a sequence of counterexamples has, by
compactness of $T$, a subsequence converging to an $H$-chord. Sufficiently close configuration
paths are homotopic in $Y$ with endpoints on $T$, a contradiction. Thus any common value or
uniform bound for $Q$ on the chords of $H$ persists under sufficiently small perturbations.

Finally, let $\phi,\psi\in\Ham(S^2)$. Fix a Hamiltonian path $G$ from $\mathrm{id}$ to $\psi$, and choose a
Hamiltonian path $F$ from $\mathrm{id}$ to $\psi^{-1}\phi$ whose Hofer length is arbitrarily close to $d_{\rm H}(\phi,\psi)$.
The composed path is generated by
\begin{equation}
H_t=G_t+F_t\circ(\phi_G^t)^{-1},\qquad
\phi_H^1=\phi,\quad
\norm{H-G}_{(1,\infty)}=\norm{F}_{(1,\infty)}.
\label{eq:restored-4-3}
\end{equation}
Apply Lemma~\ref{lem:meridian} to compatible Hamiltonians obtained from $H,G$ by the common cutoff
\eqref{eq:restored-3-12}. Using \eqref{eq:restored-3-13}, and allowing arbitrarily small surface perturbations to ensure nondegeneracy,
we obtain the following: for every $\varepsilon>0$, the choices can be made so that there
are chords $x_0,x_1$ of the perturbed paths satisfying
\begin{equation}
|Q(\beta_{x_0})-Q(\beta_{x_1})|
\le\frac{4D}\eta d_{\rm H}(\phi,\psi)+\varepsilon.
\label{eq:restored-4-4}
\end{equation}
In the applications below, the preceding stability observation allows us to use the values or
bounds for the unperturbed chords and then let $\varepsilon\to0$.

\section{Starting the construction}

\subsection{The four caps and the smooth twist maps}\label{sec:construction}
As above, we work on the round sphere with coordinates $x,y,z$.  Since a coordinate strip $\{|x|<s\}$ has area $s$, the inclusion
\[
\{|xz|<c_0\}\subset\{|x|<\sqrt{c_0}\}\cup\{|z|<\sqrt{c_0}\}
\]
gives
\begin{equation}\label{eq:cap-constants}
\epsilon\le\frac1{10},\qquad
q=\frac{1-\epsilon}4\ge\frac9{40},\qquad
\eta:=\frac{q-\epsilon}6=\frac{5q-1}6\ge\frac1{48}.
\end{equation}
Choose a smooth nondecreasing function $\rho:[-1,1]\to[0,1]$ with
\[
\rho(s)=0\ (s\le-\delta),\qquad
\rho(s)=1\ (s\ge\delta),\qquad
\rho(s)+\rho(-s)=1,\qquad\delta=\frac1{800}.
\]
For an axis coordinate $s$ (here $s$ is $x$,$y$, or $z$) use an angular coordinate $\theta_s$ with
$\omega=(4\pi)^{-1}d\theta_s\wedge ds$. The Hamiltonian
\begin{equation}\label{eq:twist-hamiltonian}
h_{\rm tw}(s)=-\frac12\int_{-1}^{s}\rho(u)\,du
\end{equation}
satisfies $\dot\theta_s=-4\pi h_{\rm tw}'(s)=2\pi\rho(s)$ with our sign convention,
and hence generates $\theta_s(t)=\theta_s(0)+2\pi t\rho(s)$. Let $a$ and $b$ be the
time-one maps for $s=x$ and $s=z$, respectively. These are same-sign twists
about the equators $x=0$ and $z=0$. Their time-one maps are supported in the
corresponding $\delta$-strips, and hence fix neighborhoods of all four disks
pointwise. Moreover,
\begin{equation}\label{eq:explicit-word}
\norm{a}_{\rm H},\norm{b}_{\rm H}\le\frac12,\qquad
f_0:=a^2b^{-1}a^3b^{-2},\qquad\norm{f_0}_{\rm H}\le4.
\end{equation}
Take $E_0=\{x=0\}$. Each component of $S^2\setminus E_0$ contains exactly two disks.

Mark one point in each disk, and denote the resulting four-point set
by $P$. Here $\Mod(S^2,4)=\Mod(S^2,P)$ denotes the group of
orientation-preserving mapping classes preserving $P$ setwise;
permutations of the marked points are allowed.

The double cover $q:T^2\to S^2$ branched over $P$ is a torus.
Identify $T^2$ with $\mathbb R^2/\mathbb Z^2$, so that the deck
involution is $\iota(x)=-x$. Every homeomorphism representing
$g\in\Mod(S^2,4)$ has two lifts to $T^2$, differing by $\iota$.
Their actions on $H_1(T^2;\mathbb Z)$ are therefore matrices
$A$ and $-A$. Consequently, after choosing a homology basis,
\begin{equation}\label{eq:torus-quotient}
\pi:\Mod(S^2,4)\longrightarrow\PSL(2,\mathbb Z)
=\langle S,R\mid S^2=R^3=1\rangle,
\qquad
\pi(g)=[A],
\end{equation}
is a well-defined homomorphism. It is surjective: every linear
torus automorphism commutes with $\iota$ and hence descends to
the marked sphere. Its kernel is
\[
K\cong(\mathbb Z/2\mathbb Z)^2,
\]
represented by the descents of the four translations
$x\mapsto x+\xi$ with $2\xi=0$ in $T^2$.
See \cite[Corollary~3.3 and Remark~3.4]{AMU06}.

We explain explicitly how this quotient detects reducibility.
An essential simple closed curve $\gamma\subset S^2\setminus P$
separates $P$ into two pairs. Its inverse image under $q$
consists of two parallel essential simple closed curves on $T^2$.
The primitive homology class of either component, taken up to
sign, determines $[\gamma]$ uniquely. In fact, this gives a
bijection
\[
\left\{
\begin{array}{c}
\text{isotopy classes of essential}\\
\text{unoriented simple closed curves in }S^2\setminus P
\end{array}
\right\}
\longleftrightarrow
\{v\in\mathbb Z^2:\ v\text{ is primitive}\}/\{\pm1\};
\]
see \cite[Proposition~2.6]{FM12}.
This bijection is equivariant: if $[\gamma]$ corresponds to
$\{\pm v\}$ and $\pi(g)=[A]$, then $g[\gamma]$ corresponds to
$\{\pm Av\}$.

Suppose now that $\pi(g)$ is parabolic. Then
$|\operatorname{tr}A|=2$ and $A\ne\pm I$, so there is a primitive
integer vector $v$ satisfying $Av=\pm v$. Equivariance and
injectivity of the above correspondence imply that $g$ preserves
the isotopy class of the associated essential curve $\gamma$.
Moreover, $g$ has infinite order because $\pi(g)$ does.
Thus $g$ is reducible.

If $\pi(g)$ is elliptic, it has finite order $m$, so
$g^m\in K$. Since every element of $K$ has square equal to the
identity, $g^{2m}=1$. Thus $g$ is periodic.
The same conclusion holds when $\pi(g)=1$.

Finally, if $\pi(g)$ is hyperbolic, then $g$ has infinite order
and no positive power can preserve an essential curve:
otherwise equivariance would give $A^kv=\pm v$ for some
$k\ge1$ and primitive $v$, which is impossible for a hyperbolic
matrix. Hence $g$ is pseudo-Anosov by the Nielsen--Thurston
classification. This compatibility of types is also recorded
in \cite[Lemma~3.6]{AMU06}.

With compatible choices of twist sign and marking, the images of $a,b$ are
\[
U=\begin{pmatrix}1&2\\0&1\end{pmatrix}=(SR)^2,\qquad
V=\begin{pmatrix}1&0\\-2&1\end{pmatrix},\qquad V^{-1}=(SR^2)^2.
\]
The factor two comes from the two parallel lifts of a separating curve.
The image of $f_0$ is
\begin{equation}\label{eq:word-matrix}
W=U^2V^{-1}U^3V^{-2}
=\begin{pmatrix}241&58\\54&13\end{pmatrix}
=(SR)^4(SR^2)^2(SR)^6(SR^2)^4.
\end{equation}

Since $\det W=1$ and $\operatorname{tr}W=254>2$, the matrix $W$ is hyperbolic,
and no positive power of $W$ has eigenvalue $1$ or $-1$. The relative mapping
class of $f_0$ on the four-punctured sphere is therefore neither periodic nor
reducible. Indeed, periodicity would imply $W^k=\pm I$ for some $k\ge1$.
Reducibility would imply that some positive power preserves an essential
simple closed curve. Its lifts to the branched double cover are parallel
essential curves on the torus, whose primitive homology class $v$ would
satisfy $W^kv=\pm v$, again a contradiction. The Nielsen--Thurston
classification now implies that the relative mapping class of $f_0$ is
pseudo-Anosov.

Here the pseudo-Anosov assertion concerns the mapping class relative to the
four marked disks; $f_0$ itself is a smooth Hamiltonian representative, equal
to the identity near those disks.

The construction is related to the use of pseudo-Anosov mapping classes by
Jia and Webb \cite{JW25}. They construct quasimorphisms vanishing on stabilizers
of balanced curves and prove infinite diameter of the space of equators
for quantitative fragmentation metrics. Our Hofer estimate uses configuration
braids together with a Floer continuation strip. The connection between
Lagrangian configurations and Hofer geometry was developed in \cite{PS23}; the
Floer theory used here is the link Floer theory of \cite{CHMSS22}.

\section{The free group embedding}
\label{sec:f2embedding}

The construction has two ingredients. We choose two fixed Hamiltonian
maps, which automatically define a homomorphism from the free group.
We then prove a lower bound by decoding their relative mapping classes
and applying a single continuation strip. No realization of the relations
in a larger mapping-class group is required.

\subsection{The generators and the upper bound}

Write $F_2=\langle x,y\rangle$ and recall its intrinsic generator norm
\[
\nu_F(w)=\min\left\{r:w=\prod_{i=1}^r c_i e_i c_i^{-1},\quad
 c_i\in F_2,\quad e_i\in\{x,x^{-1},y,y^{-1}\}\right\}.
\]
Set $d_F(v,w)=\nu_F(v^{-1}w)$. The empty product has length zero.
Reversing, concatenating, and conjugating
factorizations show that $\nu_F$ is symmetric, subadditive, and conjugation
invariant. Only the identity has norm zero, so $d_F$ is a bi-invariant
metric. Conjugation is allowed by elements of $F_2$ itself. This norm is
different from ordinary reduced-word length and must not be identified
without proof with a norm inherited from a larger group.

Let $a,b$ be the Hamiltonian twists defined in Section \ref{sec:construction}, and set
\begin{equation}\label{eq:f2-Hamiltonian-generators}
A=a^3b^{-1}a^3,\qquad B=b^{-3}a^2b^{-3},\qquad
\Phi(x)=A,\quad\Phi(y)=B.
\end{equation}
The universal property of the free group defines a homomorphism
$\Phi:F_2\to\Ham(S^2)$. All its values fix the four caps pointwise.
The Hofer upper bound follows at once from
\[
\norm{A}_{\rm H}\le\tfrac72,\qquad \norm{B}_{\rm H}\le4,
\qquad\text{hence}\qquad \norm{\Phi(w)}_{\rm H}\le4\nu_F(w).
\]
Indeed, each allowed conjugate in a shortest factorization of $w$ maps
to a Hamiltonian conjugate of $A^{\pm1}$ or $B^{\pm1}$.

\subsection{The relative classes and the decoder}

For the lower bound, put
\[
G=C_2*C_3=\langle S,R\mid S^2=R^3=1\rangle,
\qquad U=(SR)^2,\quad V^{-1}=(SR^2)^2.
\]
The projected relative mapping classes of $a,b$ are $U,V$, respectively.
Thus the projected homomorphism of \eqref{eq:f2-Hamiltonian-generators} is
\begin{equation}\label{eq:f2-coding-homomorphism}
\iota:F_2\longrightarrow G,\qquad
\iota(x)=U^3V^{-1}U^3,\qquad
\iota(y)=V^{-3}U^2V^{-3}.
\end{equation}
We will prove, simultaneously, that $\iota$ is injective and that
\begin{equation}\label{eq:f2-algebraic-lower}
\nu_F(w)\le104\,\tau(\iota(w))\qquad(w\in F_2).
\end{equation}
Here $\tau$ is the conjugation-invariant torsion norm on $G$:
\[
\tau(g)=\min\left\{r:g=\prod_{j=1}^{r}c_j e_j c_j^{-1},\quad
c_j\in G,\quad e_j\in\{S,R,R^2\}\right\}.
\]
The empty product has length zero. The conjugating elements range over
all of $G$ and do not contribute to the length. In particular,
$\tau(SR)\le2$, and the same bound holds for every conjugate of
$(SR)^{\pm1}$.

For a finite string $c=c_1\cdots c_k$ with $c_i\in\{1,2\}$, define
\[
\mathsf u(c)=SR^{c_1}\cdots SR^{c_k}.
\]
In an exponent string, $1^j$ or $2^j$ means $j$ consecutive copies of
that digit. Equation~\eqref{eq:f2-coding-homomorphism} becomes
\[
\iota(x)=\mathsf u(1^6\,2^2\,1^6),\qquad
\iota(y)=\mathsf u(2^6\,1^4\,2^6).
\]
A \emph{marker word} is a fixed short syllable pattern used to recognize
a source letter inside its coded image. Choose
\begin{equation}\label{eq:f2-markers}
p_x=\mathsf u(1^2\,2^2\,1^3),\qquad
p_y=\mathsf u(2\,1^4\,2^2).
\end{equation}
Each has syllable length $\ell_{\rm mark}=14$, where $R^2$ is one syllable.
The four words $p_x,p_y,p_x^{-1},p_y^{-1}$ are pairwise distinct.
A \emph{marker occurrence} is a particular interval of fourteen
consecutive syllable positions whose reading is one of these words.
For example,
\[
\iota(x)=(SR)^4\,\underbrace{(SR)^2(SR^2)^2(SR)^3}_{p_x}\,(SR)^3,
\]
so $p_x$ occurs in the interior of the block encoding $x$.

Define $\mathcal D:G\to F_2$ as follows. Scan the unique reduced syllable
word for $g$ from left to right. Record \emph{every} occurrence, including
overlapping occurrences, of $p_x,p_y,p_x^{-1},p_y^{-1}$ as
$x,y,x^{-1},y^{-1}$, respectively. List these labels in order of their
starting positions and interpret their product as an element of $F_2$.
There is at most one label at any starting position, since the four
patterns are distinct and have the same length. Set $\mathcal D(1)=1$.
This definition makes no homomorphism assertion about $\mathcal D$.

\subsection{Why every source word can be recovered}

\begin{lemma}[Exact decoding]\label{lem:f2-exact}
For every $w\in F_2$, one has $\mathcal D(\iota(w))=w$.
In particular, $\iota$ is injective.
\end{lemma}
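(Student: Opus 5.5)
We write a reduced word $w=e_1e_2\cdots e_n$ with $e_i\in\{x^{\pm1},y^{\pm1}\}$ and $e_{i+1}\ne e_i^{-1}$. We then compute the reduced syllable word of $\iota(w)=\iota(e_1)\cdots\iota(e_n)$ in $G=C_2*C_3$ explicitly, and show that the marker occurrences in it are exactly one per block, in order, with the correct labels. Injectivity is then immediate, since $\mathcal D\circ\iota=\mathrm{id}$. The case $w=1$ is the convention $\mathcal D(1)=1$.

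\textbf{Step 1: normal forms of the four blocks.} For a digit string $c=c_1\cdots c_k$ we have $\mathsf u(c)^{-1}=R^{-c_k}S\cdots R^{-c_1}S=R^{3-c_k}S\,R^{3-c_{k-1}}S\cdots R^{3-c_1}S$. So, up to shifting the alternation, the inverse block reads the reversed string with the digits $1\leftrightarrow2$ swapped. Concretely, writing $\bar c$ for the reversed and swapped string, the block $\iota(x)^{-1}$ has $R$-exponents $\overline{1^62^21^6}=2^61^22^6$, and $\iota(y)^{-1}$ has exponents $2^21^62^2$ read as $\overline{2^61^42^6}$. Each block is an alternating word in $S$ and $R^{\pm1}$ of length $28$ syllables. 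The non-inverted blocks start with $S$ and end with an $R$-power; the inverted blocks start with an $R$-power and end with $S$.

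\textbf{Step 2: junction analysis.} We check the twelve admissible ordered pairs $(e_i,e_{i+1})$, excluding the four cancelling pairs. At a junction one of three things happens. Either (a) the words simply concatenate as an alternating word ($R$-power followed by $S$, or $S$ followed by an $R$-power); or (b) $S\cdot S$ cancels; or (c) $R^{c}R^{c'}$ fuses into $R^{c+c'}$, which cancels exactly when $c+c'\equiv0\pmod3$. We show that cancellation never propagates more than one or two syllables into a block. The reason is that the first and last few digits of distinct adjacent blocks disagree in the swapped sense: for instance the end digit $1$ of $\iota(x)$ against the start digit of $\iota(y)^{-1}$, and so on. Since $e_{i+1}\neq e_i^{-1}$, the long runs $1^6$ and $2^6$ cannot fully cancel. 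Hence the reduced word of $\iota(w)$ is the concatenation of the blocks with only $O(1)$ syllables altered near each junction, and every block keeps its central portion intact.

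\textbf{Step 3: marker bookkeeping.} First, each intact core contains exactly one marker occurrence, with the correct label. For $\iota(x)$ this is the displayed occurrence of $p_x$ straddling $2^2$. For $\iota(y)$ it is $p_y=\mathsf u(2\,1^4\,2^2)$ inside $2^61^42^6$. For the inverted blocks the occurrence is the inverse marker, since inversion commutes with taking subwords. Second, we rule out spurious occurrences. A marker must contain a specific short digit pattern: $1^22^21^3$, $21^42^2$, or their reversed-swapped forms. We compare this against the run structure of each block, such as runs of length $6,2,6$ or $6,4,6$. Any window containing such a pattern must sit at a unique position inside a block, or must straddle a junction. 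In the straddling case we use the Step 2 list of altered junction syllables to show that the run lengths there, for example a fused run $1^{6}1^{6}$ or a run of length at least $6$ meeting a run of the other digit, cannot match the short runs $2^2,1^3,1^4$ in the required order.

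The main obstacle is this last finite check: verifying across all twelve junction types, including fused or cancelled syllables and the alternation shift at inverse blocks, that no window of fourteen syllables reads $p_x^{\pm1}$ or $p_y^{\pm1}$ except the designated ones. I would organize it by recording for each block its first and last three digits after reduction. A marker straddling a junction would need a run of length at most $4$ flanked by runs of length at most $3$, whereas junction neighbourhoods carry runs of length at least $5$. Granting this, scanning left to right produces the labels $e_1,\dots,e_n$ in order, so $\mathcal D(\iota(w))=w$.
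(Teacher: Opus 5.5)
\textbf{Gap in the decisive check.} Your strategy matches the paper's: take the reduced normal form of $\iota(w)$ as concatenated blocks, analyse the junctions, and anchor the markers at the central runs. However, the step that actually proves the lemma is left as ``Granting this,'' and the criterion you propose for it is false.

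At a mixed-sign junction ($xy^{-1}$, $yx^{-1}$, $x^{-1}y$, $y^{-1}x$; the other mixed pairs are excluded by reducedness) there is at most one cancellation $S\cdot S$. After it, the two $R$-syllables that meet are \emph{equal} and fuse to the other nontrivial power ($RR=R^2$ or $R^2R^2=R$). In the exponent string this reads locally $c^5\,\bar c\,c^5$. So junction neighbourhoods contain a new maximal run of length one, contrary to your claim that they carry only runs of length at least $5$.

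Your test ``a run of length at most $4$ flanked by runs of length at most $3$'' does not exclude a window centred on this length-one run. The flank bound is also no obstruction at all. The window truncates its outer runs, and the genuine occurrences sit between ambient runs of length $5$, $6$ or $12$. So the contrast you draw (short flanks versus ambient runs of length $\ge5$) would rule out the genuine occurrences too. Your Step 3 list of junction outcomes (merged $1^61^6$, or a long run meeting the other digit) omits exactly this mixed-sign case.

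\textbf{The missing invariant.} This is the one the paper uses. In each marker exactly one run is bounded on both sides inside the window, namely $2^2$, $1^2$, $1^4$, $2^4$ for $p_x,p_x^{-1},p_y,p_y^{-1}$. Hence every occurrence forces a \emph{maximal} run of $\iota(w)$ of length exactly $2$ or $4$ with a prescribed digit.

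The junction analysis shows that all non-central maximal runs of the reduced word have lengths in $\{1,5,6,12\}$:
\begin{itemize}
\item same-sign junctions are already reduced and give $6\,|\,6$ or a merged $12$;
\item mixed-sign junctions give $5,1,5$;
\item the two ends of the word give $6$.
\end{itemize}
So occurrences are anchored exactly at central runs of the matching type, and the alternation of $S$ and $R$-syllables fixes the start position. Conversely, the surviving flanks have length $\ge5$, which accommodates the at most three flank digits a marker needs.

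\textbf{Correction to Step 1.} $\iota(y)^{-1}$ has exponent string $1^62^41^6$, not $2^21^62^2$. With your string, the junction $\iota(x)\iota(y)^{-1}$ would produce $R\cdot R^2=1$, cancellation would continue into both blocks, and your Step 2 claim would fail. The correct reason cancellation stops immediately is that the meeting $R$-syllables are equal, not a ``swapped'' disagreement.
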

\begin{proof}
Write $w$ as a freely reduced word in $x^{\pm1},y^{\pm1}$, and substitute
the four corresponding reduced syllable blocks in $G$. Drop the
$S$-syllables temporarily and record the exponents of the $R$-syllables.
A \emph{run} is a maximal consecutive string of one digit. The four
oriented blocks have exponent strings
\[
\begin{array}{c|c|c}
\text{source letter}&\text{exponent string of its image}&\text{central run}\\ \hline
x&1^6\,2^2\,1^6&2^2\\
x^{-1}&2^6\,1^2\,2^6&1^2\\
y&2^6\,1^4\,2^6&1^4\\
y^{-1}&1^6\,2^4\,1^6&2^4
\end{array}
\]
Positive blocks begin with $S$ and end in the $C_3$-factor; negative
blocks begin in the $C_3$-factor and end with $S$.

We describe all reductions at a block junction. For two positive blocks,
or two negative blocks, the syllable junction is already reduced. In the
exponent string, the exterior runs either remain separate runs of length
six or merge into one run of length twelve.

At a mixed-sign junction, the source letters involve different generators:
otherwise they would cancel in the freely reduced word $w$. For a
positive-negative junction, the adjacent $C_3$-syllables are equal, so
their product is nonidentity and fuses by $RR=R^2$ or $R^2R^2=R$.
For a negative-positive junction, the two adjacent copies of $S$ cancel,
and then the same kind of nontrivial $C_3$-fusion occurs. In the exponent
string either case leaves a flank run of length five on each side of
one new run of length one. The fused syllable has digit opposite to
the two neighboring flank runs.

These reductions at different junctions do not interact: the two
junctions adjacent to a block affect different length-six exterior runs,
and leave its central run untouched. The resulting syllable word is
reduced. Its noncentral maximal exponent runs have lengths in
$\{1,5,6,12\}$. Therefore its runs of length two or four are exactly
the original central runs, in source-letter order.

The markers detect exactly these four central types. Their exponent
strings, with the full syllable orientation retained, are
\[
\begin{array}{c|c|c}
\text{marker}&\text{exponent string}&\text{central run}\\ \hline
p_x&1^2\,2^2\,1^3&2^2\\
p_x^{-1}&2^3\,1^2\,2^2&1^2\\
p_y&2\,1^4\,2^2&1^4\\
p_y^{-1}&1^2\,2^4\,1&2^4
\end{array}
\]
The inverse markers begin in $C_3$ and end with $S$, as required by
syllable inversion; they are not obtained by applying $\mathsf u$ to
the displayed strings. Each marker requires a whole central run, bounded
on both sides by the other digit. It therefore cannot match a proper
subinterval of a longer run. Conversely, every surviving central run
has flanks of length at least five, whereas every marker requires at
most three flank digits on either side. It gives exactly one occurrence
of its designated marker: the central run fixes the start position of
the exponent window, and the alternating syllables fix the specified
initial factor of the marker. There are no other occurrences.

The output labels are consequently exactly the letters of $w$, in their
original order. This proves the identity, also for the empty word.
If $\iota(w)=1$, applying $\mathcal D$ now gives $w=1$.
\end{proof}

\subsection{Uniform control of the decoder}
The next lemma concerns the conjugation-invariant metric $d_F$, rather
than ordinary word distance. This distinction makes it possible to
move local errors through arbitrarily long words at no additional cost.

\begin{lemma}\label{lem:f2-decoder}
The decoder satisfies
\begin{align}
\mathcal D(g^{-1})&=\mathcal D(g)^{-1},
\label{eq:f2-decoder-inverse}\\
d_F\bigl(\mathcal D(gh),\mathcal D(g)\mathcal D(h)\bigr)&\le78,
\label{eq:f2-decoder-defect}\\
\nu_F(\mathcal D(g))&\le104\,\tau(g)
\label{eq:f2-decoder-norm}
\end{align}
for all $g,h\in G$.
\end{lemma}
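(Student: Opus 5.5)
The three assertions are proved in the order stated, with \eqref{eq:f2-decoder-norm} deduced formally from the first two.

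\emph{Inverse compatibility.} The reduced syllable word of $g^{-1}$ is the reversal of the word of $g$, with each syllable inverted ($S\mapsto S$, $R\leftrightarrow R^2$). Under this operation an occurrence of $p_x$ in $g$ at a window $I$ becomes an occurrence of $p_x^{-1}$ in $g^{-1}$ at the reflected window, and likewise for $p_y$. This works because the markers were chosen so that their inverses are exactly the displayed inverse patterns. Reflection reverses the order of the starting positions. All markers have the same length fourteen, so the order of start positions coincides with the order of end positions. Hence the label list of $g^{-1}$ is the reversed list of inverted labels, which gives \eqref{eq:f2-decoder-inverse}.

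\emph{Bounded defect.} Write the reduced words as $g=g'c$ and $h=c^{-1}h'$, where $c$ is the maximal cancelling part. The reduced word of $gh$ is then $g' \,m\, h'$, where $m$ is either empty or a single fused $C_3$-syllable. Every marker occurrence in $g$ lies entirely in $g'$, or meets the last thirteen syllables of $g'$ together with $c$; it cannot start inside $c$ and survive. The same holds in $h$ and in $gh$. Occurrences lying more than fourteen syllables from the junction point therefore appear identically in $\mathcal D(g)$ and $\mathcal D(gh)$, respectively in $\mathcal D(h)$ and $\mathcal D(gh)$. The remaining occurrences are local. I would count them as follows.
\begin{itemize}
\item In $gh$, the windows meeting the junction region (about $2\cdot 14$ positions around $m$) give at most roughly $28$ labels.
\item In $g$, the occurrences not lying deep in $g'$ split into a block meeting $c$ plus the edge of $g'$. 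The occurrences inside $c$ itself are not bounded in number.
\end{itemize}
The unbounded part is handled by the symmetry just proved. The occurrences of $c$ read in $g$ contribute a word $\omega$, and the same syllables read in $h$ (as $c^{-1}$) contribute $\omega^{-1}$ by the inverse argument. Write $\mathcal D(g)=\alpha\, e_1\,\omega\, e_2$ and $\mathcal D(h)=e_3\,\omega^{-1}e_4\,\beta$, where the $e_i$ are bounded boundary words, and write $\mathcal D(gh)=\alpha\, e_5\,\beta$. The product $\mathcal D(g)\mathcal D(h)$ then contains $\omega\, e_2e_3\,\omega^{-1}$, a conjugate of a bounded word, and conjugates cost nothing in $\nu_F$. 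Counting $|e_1|+\dots+|e_5|$ with windows of length $14$ near each of the two cut points should give a bound like $78$. The main obstacle is this careful bookkeeping: markers straddling the boundary of $c$ (overlapping occurrences are allowed), and the fused syllable $m$ possibly creating new occurrences. I expect each side to contribute at most $13$ straddling windows, and the totals should be combined to match $78=6\cdot 13$.

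\emph{Norm bound.} Write $g=\prod_{j=1}^r c_je_jc_j^{-1}$ with $r=\tau(g)$. The decoder of a single torsion syllable is trivial, since every marker has length $14$. By \eqref{eq:f2-decoder-inverse} and \eqref{eq:f2-decoder-defect}, $\mathcal D(c e c^{-1})$ lies within $2\cdot78$ of the product $\mathcal D(c)\mathcal D(e)\mathcal D(c)^{-1}$, which is trivial. That direct estimate is too costly. Instead I would decode $ce c^{-1}$ directly: its reduced form is $c' \,e'\, c'^{-1}$, so by the reflection symmetry its label list is $\omega\,\epsilon\,\omega^{-1}$ with a bounded middle of size at most about $26$. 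Its norm is therefore at most $26$. Then telescoping with \eqref{eq:f2-decoder-defect} across the $r$ factors gives $\nu_F(\mathcal D(g))\le 26r+78(r-1)\le104r$, which is \eqref{eq:f2-decoder-norm}.
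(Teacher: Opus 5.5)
Your outline follows the paper's proof. You use reflection of marker windows for \eqref{eq:f2-decoder-inverse}. For \eqref{eq:f2-decoder-defect} you use local windows at the junctions, with $\mathcal D(c)$ cancelling against $\mathcal D(c^{-1})=\mathcal D(c)^{-1}$. For \eqref{eq:f2-decoder-norm} you decode a conjugate $c'ec'^{-1}$ that is already reduced as written, rather than applying \eqref{eq:f2-decoder-defect} twice, and then telescope to $26r+78(r-1)\le104r$.

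The gap is that you never carry out the count in \eqref{eq:f2-decoder-defect}; you only say it \emph{should give a bound like $78$}. Your setup also needs two corrections:
\begin{itemize}
\item When fusion occurs, the reduced word of $gh$ is not $g'mh'$. The fused syllable replaces the last syllable $\gamma$ of $g'=g''\gamma$ and the first syllable $\delta$ of $h'=\delta h''$, so $gh=g''(\gamma\delta)h''$. The \emph{deep} part $\alpha$ must therefore be read from $g''$.
\item Your $e_2$ and $e_3$ are empty, because $c$ is a suffix of $g$ and $c^{-1}$ is a prefix of $h$.
\end{itemize}

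The count then closes from one observation, which is the paper's reduced-join estimate. Suppose $usv$ is reduced with $s$ a single syllable. Every window not contained in $u$ or in $v$ contains $s$, so there are at most $14$ such windows (at most $13$ if $s$ is empty). Since all markers have length $14$, their start positions come after those of windows in $u$ and before those of windows in $v$. Hence
\[
\mathcal D(usv)=\mathcal D(u)\,E\,\mathcal D(v),\qquad |E|\le14.
\]
Apply this to $g=g''\gamma c$, $h=c^{-1}\delta h''$ and $gh=g''(\gamma\delta)h''$. It shows that $\mathcal D(gh)^{-1}\mathcal D(g)\mathcal D(h)$ is conjugate to $e_5^{-1}e_1e_4$ with each $|e_i|\le14$. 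So the defect is at most $42\le78$, and at most $39$ without fusion.

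The paper proves the estimate only for a single join ($|E|\le13$) and applies it up to twice in each of the three words. That is more modular but gives the cruder bound $6\cdot13=78$; your direct accounting, once done, is sharper. The same window count around the middle syllable gives $\nu_F(\mathcal D(c'ec'^{-1}))\le14$, so the bound $26$ you use in the norm step holds.
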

\begin{proof}
Inverting a reduced syllable word reverses the order of all occurrences
of patterns of a fixed length and replaces each by its inverse pattern.
This proves \eqref{eq:f2-decoder-inverse}.

Put $J=\ell_{\rm mark}-1=13$. If $uv$ is a reduced concatenation, an occurrence
lies wholly in $u$, wholly in $v$, or crosses their junction. At most
$J$ possible starting positions give the third type. Equal pattern
lengths ensure that all crossing occurrences come after occurrences
wholly in $u$ and before occurrences wholly in $v$. Thus, in $F_2$,
\[
\mathcal D(uv)=\mathcal D(u)\,E\,\mathcal D(v),
\qquad |E|_{\mathrm{ordinary}}\le J.
\]
As $\nu_F(E)\le |E|_{\mathrm{ordinary}}$ and $d_F$ is bi-invariant,
\begin{equation}\label{eq:f2-reduced-join}
d_F\bigl(\mathcal D(uv),\mathcal D(u)\mathcal D(v)\bigr)\le J.
\end{equation}
Indeed, the difference of the two products is a conjugate of $E$ or
$E^{-1}$, and conjugation does not change its $\nu_F$-norm.

For a general product $gh$, remove its maximal inverse suffix-prefix
cancellation and write the reduced concatenations
\[
g=uc,\qquad h=c^{-1}v.
\]
If $uv$ is reduced, apply \eqref{eq:f2-reduced-join} to these two
expressions and to $uv$. The contributions of $\mathcal D(c)$ and
$\mathcal D(c^{-1})$ cancel by \eqref{eq:f2-decoder-inverse}, so the
total error is at most $3J$.

Otherwise $u=u'\gamma$ and $v=\delta v'$, where $\gamma,\delta$ are
syllables in the same factor and $\gamma\delta\ne1$. Then
\[
g=u'\gamma c,\qquad h=c^{-1}\delta v',\qquad
gh=u'(\gamma\delta)v'
\]
are reduced concatenations. One-syllable words contain no marker.
Using at most two reduced joins in each of these three expressions
compares their decoder values, respectively, with
\[
\mathcal D(u')\mathcal D(c),\qquad
\mathcal D(c)^{-1}\mathcal D(v'),\qquad
\mathcal D(u')\mathcal D(v').
\]
Each comparison costs at most $2J$. Bi-invariance permits multiplication
of these comparisons without changing the errors, so the total error
is at most $6J=78$. Empty constituent words cause no difficulty and
only decrease the number of joins. This proves
\eqref{eq:f2-decoder-defect}.

Let $e\in\{S,R,R^2\}$. There exists a conjugate $cec^{-1}$ which is already in reduced form.
Indeed, choose an initial reduced conjugator $c$. If its last syllable lies in the same cyclic subgroup
$\langle S\rangle$ or $\langle R\rangle$ as $e$, remove that
syllable: it commutes with $e$, so this does not change the group element
$cec^{-1}$. The resulting conjugator, which we still call $c,$ is either empty or ends
in the other cyclic subgroup. Thus the concatenation
$c\,e\,c^{-1}$ is reduced, with at most two joins. Hence, as $\mathcal D(e)=1,$


\[
\nu_F\bigl(\mathcal D(cec^{-1})\bigr)\le2J=26.
\]
For $g\ne1$, choose a factorization into $r=\tau(g)$ such conjugates.
Repeated application of \eqref{eq:f2-decoder-defect} and the triangle
inequality gives
\[
\nu_F(\mathcal D(g))
\le26r+78(r-1)\le104r.
\]
For $g=1$, both sides of \eqref{eq:f2-decoder-norm} vanish. This proves
the final assertion.
\end{proof}

Combining Lemmas~\ref{lem:f2-exact} and~\ref{lem:f2-decoder} gives
\[
\nu_F(w)=\nu_F\bigl(\mathcal D(\iota(w))\bigr)
\le104\,\tau(\iota(w)),
\]
which is \eqref{eq:f2-algebraic-lower}. The decoder retains an ordered
product of labels in a free group. It is not a homogeneous real
quasimorphism, so the bound on real homogeneous quasimorphisms on
single commutators does not obstruct this argument.

\subsection{The direct continuation-strip estimate}

\begin{proof}[Completion of the proof of Theorem~\ref{thm:f2embedding}]
Fix $w\in F_2$. Every $\Phi(w)$ fixes the four caps pointwise, and its
projected relative mapping class is $\iota(w)$. Let $\varepsilon>0$
and choose a Hamiltonian path to $\Phi(w)$ of length at most
$\norm{\Phi(w)}_{\rm H}+\varepsilon$. Compare it with the constant path.
After sufficiently small regularizing perturbations, the strip lemma
(Lemma~\ref{lem:strip}) supplies an actual continuation strip with
$r$ diagonal intersections such that
\begin{equation}\label{eq:f2-strip-budget}
\eta r\le4\bigl(\norm{\Phi(w)}_{\rm H}+\varepsilon\bigr)+o(1).
\end{equation}
The error tends to zero with the regularizing perturbations.

All source chord labels have projected mapping class $\iota(w)$,
and all identity chord labels have trivial projected class. This remains
true under the sufficiently small perturbations used above. The
meridian relation (Lemma~\ref{lem:meridian}) therefore expresses
$\iota(w)$, or its inverse according to the orientation convention, as
a product of $r$ conjugates of projected half twists or their inverses.
Each such factor is conjugate to $(SR)^{\pm1}$ and has $\tau$-norm at most two.
Consequently,
\begin{equation}\label{eq:f2-meridian-budget}
\tau(\iota(w))\le2r.
\end{equation}
The decoder estimate now gives
\[
\nu_F(w)
=\nu_F\bigl(\mathcal D(\iota(w))\bigr)
\le104\tau(\iota(w))\le208r.
\]
Combining this with \eqref{eq:f2-strip-budget}, then sending the
regularization error and $\varepsilon$ to zero, yields
\[
\norm{\Phi(w)}_{\rm H}\ge\frac\eta{832}\nu_F(w)
\ge\frac1{39936}\nu_F(w),
\]
since $\eta\ge1/48$. Together with the upper bound already proved,
\begin{equation}\label{eq:f2-final-norm}
\frac1{39936}\nu_F(w)\le\norm{\Phi(w)}_{\rm H}\le4\nu_F(w).
\end{equation}

In particular, $\Phi(w)=\mathrm{id}$ forces $w=1$, so $\Phi$ is
injective. For $v,w\in F_2$, the homomorphism property and
bi-invariance of Hofer's metric give
\[
d_{\rm H}\bigl(\Phi(v),\Phi(w)\bigr)
=\norm{\Phi(v^{-1}w)}_{\rm H}.
\]
Apply \eqref{eq:f2-final-norm} to $v^{-1}w$ to obtain the claimed
bi-Lipschitz estimates. The comparison uses the ambient Hofer norm:
the nearly shortest Hamiltonian path was arbitrary and was not required
to preserve the caps. The maps $A,B$ are fixed smooth Hamiltonian
maps, independent of the source words.
\end{proof}

\begin{remark}[Why the original pair of twists is insufficient]
The direct homomorphism $x\mapsto a$, $y\mapsto b$ does not satisfy a
quasi-isometric lower bound for $\nu_F$. The half-turn
$\mathcal S(x,y,z)=(z,-y,x)$ satisfies $\mathcal S a\mathcal S^{-1}=b$ and
$\norm{\mathcal S}_{\rm H}\le1/2$. Consequently
\[
\norm{a^n b^{-n}}_{\rm H}=\norm{[a^n,\mathcal S]}_{\rm H}\le2\norm{\mathcal S}_{\rm H}\le1
\qquad(n\ge1).
\]
On the other hand, abelianization $F_2\to\mathbb Z^2$ gives
$\nu_F(x^ny^{-n})\ge2n$: every allowed conjugate maps to a signed
coordinate vector, whereas $(n,-n)$ has $\ell^1$-norm $2n$.
The reverse inequality is given by the written word, so
$\nu_F(x^ny^{-n})=2n$. The generator words in
\eqref{eq:f2-Hamiltonian-generators}, together with their exact decoder,
remove this collapse of the intrinsic norm.
\end{remark}

\section{A direct proof of the equator theorem}
\label{sec:direct-equators}

This section gives an independent route to Theorem~\ref{thm:equators}.
It uses the continuation strip and its braid factorization, but neither
the free-group embedding nor the deductions through commutators and
hemisphere-supported maps. It also identifies one explicit map whose
cyclic orbit of equators escapes linearly.

Retain the twists $a,b$ and the four caps from
Section~\ref{sec:construction}, and put
\[
f_0=a^2b^{-1}a^3b^{-2},\qquad E_0=\{x=0\}.
\]
The equator $E_0$ has two caps on each side. The map $f_0$ fixes the caps
pointwise, has $\norm{f_0}_{\rm H}\le4$, and has projected relative mapping class
\[
W=U^2V^{-1}U^3V^{-2}
 =(SR)^4(SR^2)^2(SR)^6(SR^2)^4.
\]
We will prove
\begin{equation}\label{eq:direct-equator-escape}
\frac1{142848}|m-n|
\le d_{\mathcal E}\bigl(f_0^m(E_0),f_0^n(E_0)\bigr)
\le4|m-n|\qquad(m,n\in\mathbb Z).
\end{equation}
The role of the counting quasimorphism below is to distinguish the braid
of $f_0^n$ from every braid arising from a map that preserves $E_0$.

\subsection{A counting quasimorphism adapted to the explicit word}

A homogeneous quasimorphism on a group is a function $\mathfrak q$ such that
$\mathfrak q(g^n)=n\mathfrak q(g)$ for all integers $n$, and
\[
|\mathfrak q(gh)-\mathfrak q(g)-\mathfrak q(h)|\le D
\]
for a fixed constant $D$. It is conjugation invariant.

 We need a
quasimorphism on
$G=C_2*C_3=\langle S,R\mid S^2=R^3=1\rangle$ with the following
properties.

\begin{lemma}\label{lem:counting-q}
There is a homogeneous quasimorphism $\mathfrak q:G\to\mathbb R$ with defect at
most $D=744$ such that
\[
\mathfrak q(W)=1,
\]
and $\mathfrak q$ vanishes on every parabolic and every finite-order element.
\end{lemma}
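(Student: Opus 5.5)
We will build $\mathfrak q$ as the homogenization of a Brooks-type counting quasimorphism on the free product $G=C_2*C_3$, then rescale it.

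Every element of $G$ is conjugate to either a finite-order element (a conjugate of $S$, $R$ or $R^2$), or to a cyclically reduced syllable word $\mathsf u(c)$ with $c$ a cyclic string of digits in $\{1,2\}$. Under $G\cong\PSL(2,\mathbb Z)$, the parabolic elements are exactly the conjugates of $(SR)^{\pm k}=\mathsf u(1^k)^{\pm1}$ and $(SR^2)^{\pm k}=\mathsf u(2^k)^{\pm1}$, i.e. cyclic strings using a single digit. The hyperbolic elements are the cyclic strings containing both digits. The word $W$ corresponds to the cyclic string $1^4\,2^2\,1^6\,2^4$.

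Fix a pattern $p$, a reduced syllable word that contains both an $R$ and an $R^2$ syllable and is not a subword of any power of $\mathsf u(1)$ or $\mathsf u(2)$. A natural choice is a segment of $W$ such as $\mathsf u(1\,2^2\,1)$, or the full cyclic pattern of $W$ if we need uniqueness. For a reduced word $g$ set
\[
\mathfrak c_p(g)=\#\{\text{occurrences of }p\text{ in }g\}-\#\{\text{occurrences of }p^{-1}\text{ in }g\}.
\]
This is the Brooks counting function for free products in Fujiwara's form. The same junction analysis as in the proof of \eqref{eq:f2-decoder-defect} bounds its defect by a constant times $\ell(p)$: at most $\ell(p)-1$ occurrences cross each reduced join, a general product uses at most three joins, and there is one fusion syllable. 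We then homogenize, setting $\overline{\mathfrak c}_p(g)=\lim\mathfrak c_p(g^n)/n$. For a cyclically reduced $g$ this limit is the count of cyclic occurrences of $p$ minus those of $p^{-1}$ in one period, and homogenization at most doubles the defect.

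The vanishing claims are then immediate. A finite-order element is conjugate to a single syllable, which has no cyclic occurrences of $p$, and neither do the inverse pattern $p^{-1}$ and its inverse syllables. A parabolic element is conjugate to a power of $\mathsf u(1)$ or $\mathsf u(2)$, whose cyclic words use only one digit and so cannot contain $p^{\pm1}$, since both contain both digits. For $W$ we must compute the cyclic counts of $p$ and of $p^{-1}$ in $1^4\,2^2\,1^6\,2^4$. Recall that inversion reverses the string and swaps digits, together with the $S$ placement. We choose $p$ so that the net count is a nonzero integer $N$; taking $p=\mathsf u(1\,2^2\,1)$, it should occur once, with $p^{-1}$ absent. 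We then set $\mathfrak q=\overline{\mathfrak c}_p/N$.

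The main obstacle is the explicit defect constant $744$. It requires a careful count of crossing occurrences at joins, including the fusion case $RR=R^2$ or $R^2R^2=R$, where a merged syllable can create or destroy occurrences on both sides. It also requires the factor $2$ from homogenization, the division by $N$, and the syllable length of $p$. We expect to take $p$ to be a longer marker, possibly the whole cyclic word of $W$ of length $32$, to make the count of $p^{\pm1}$ in $W$ unambiguous. A bound of the shape $2\cdot(\text{joins})\cdot(2\ell(p)-1)\cdot(\text{small factor})$ will then be checked against $744$, for instance $744=24\cdot31$ with $\ell(p)=32$.
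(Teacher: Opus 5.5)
Your proposal is correct and follows the paper's route: a Brooks-type signed count of a pattern and its inverse in reduced syllable words of $C_2*C_3$, a junction analysis bounding the defect linearly in the pattern length, homogenization, and vanishing on finite-order and parabolic elements because their cyclic words are a single syllable or use a single digit. The only substantive difference is the pattern, and your first choice is the better one.

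The paper counts the full $32$-syllable word $w_0$ of $W$. To get $C(W^k)=k$ it must check that the cyclic word $1^4\,2^2\,1^6\,2^4$ is primitive and that no cyclic shift of $w_0$ equals $w_0^{-1}$. In return, this recipe works uniformly for any primitive cyclic word not conjugate to its inverse. Its bookkeeping uses $31$ crossing positions per reduced join, at most six joins in the fusion case, and a factor $4$ from homogenization; this is exactly where $744=4\cdot6\cdot31$ comes from.

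With $p=\mathsf u(1\,2^2\,1)$ no primitivity argument is needed. Among the sixteen cyclic four-digit windows of $1^4\,2^2\,1^6\,2^4$, the window $1221$ read from an $S$-syllable occurs exactly once. The window $2112$ read from an $R$-syllable, which is what $p^{-1}=R^2SRSRSR^2S$ requires, never occurs. Hence $N=1$ and $\overline{\mathfrak c}_p(W)=1$. The same bookkeeping with $\ell(p)=8$ gives defect at most $6\cdot7=42$ before homogenization and $4\cdot42=168$ after. This holds even with the elementary factor $4$ rather than the sharper factor $2$ you quote.

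Since $744$ is only an upper bound, the ``main obstacle'' you describe disappears, and you should not switch to the long marker. If you did, you would have to supply the primitivity and inverse-exclusion checks. You would also need to count at most one signed contribution per crossing starting position, giving $\ell(p)-1=31$ per join rather than $2\ell(p)-1$, to stay within $744$.
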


\begin{proof}
Let $w_0$ be the displayed reduced syllable word for $W$. Its length is
$32$, where $R^2$ counts as one syllable. For a reduced word $g$, let
$N_{w_0}(g)$ count all contiguous occurrences of $w_0$, including
overlapping occurrences, and define
\[
C(g)=N_{w_0}(g)-N_{w_0^{-1}}(g).
\]
At a reduced concatenation, only occurrences crossing the join affect
additivity. There are at most $31$ possible starting positions, and at
each position the signed contribution is $-1$, $0$, or $1$. Thus the
additivity error at one reduced join is at most $31$.

For an arbitrary product, remove the maximal inverse suffix--prefix
cancellation and write $g=uc$, $h=c^{-1}v$. The contributions of $c$
and $c^{-1}$ cancel, since $C(c^{-1})=-C(c)$. If $uv$ is reduced,
three reduced joins compare $C(gh)$ with $C(g)+C(h)$. Otherwise,
write $u=u'\alpha$ and $v=\beta v'$, with $\alpha,\beta$ in the same
factor and $\alpha\beta\ne1$. Then
\[
g=u'\alpha c,\qquad h=c^{-1}\beta v',\qquad
gh=u'(\alpha\beta)v'
\]
are reduced concatenations. At most two joins in each of these three
expressions suffice, and one-syllable words have zero count. Therefore
\[
|C(gh)-C(g)-C(h)|\le6\cdot31=186.
\]

Set
\[
\mathfrak q(g)=\lim_{k\to\infty}\frac{C(g^k)}{k}.
\]
The limit exists by the elementary almost-additive-sequence argument:
adding $186$ makes $C(g^k)$ subadditive, and subtracting $186$ makes it
superadditive; their normalized limits coincide. Iterating the defect
inequality also gives $|\mathfrak q(g)-C(g)|\le186$. It follows that $\mathfrak q$ has
defect at most $4\cdot186=744$. The construction gives homogeneity;
conjugation changes $C(g^k)$ by a bounded amount, independent of $k$,
so $\mathfrak q$ is conjugation invariant.

The cyclic exponent code of $w_0$ is
\[
1111\ 22\ 111111\ 2222.
\]
It contains ten $1$'s and six $2$'s, whereas its inverse has the two
counts exchanged. Thus no cyclic shift of $w_0$ is $w_0^{-1}$.
Moreover, this cyclic word is primitive. A repetition count would
divide both ten and six and could only be two; but a square would have
equal lengths for its two cyclic runs of $1$'s, whereas here these
lengths are four and six. Hence an occurrence of $w_0$ in $W^k$ starts
exactly at a period start, and no occurrence of $w_0^{-1}$ is possible.
Consequently
\[
C(W^k)=k,\qquad \mathfrak q(W)=1.
\]
Every parabolic in $G$ is conjugate to a nonzero power of $SR$ or its
inverse. The cyclic exponent strings of these words use only one
symbol, so the signed count and its homogenization vanish on them.
Finally, homogeneity forces $\mathfrak q$ to vanish on finite-order elements.
\end{proof}

Let $\pi:\Mod(S^2,P)\to G$ be the map obtained from the branched torus
cover, and let $\operatorname{ev}$ be the evaluation of configuration
braids from Section~\ref{sec:braid-topology}. Define
\[
Q=\mathfrak q\circ\pi\circ\operatorname{ev}:B_4(S^2)\longrightarrow\mathbb R.
\]
Our chronological concatenation convention makes $\operatorname{ev}$
an antihomomorphism. This does not affect the real quasimorphism
inequality, homogeneity, or the defect bound.

A reducible mapping class of the four-punctured sphere preserves an
essential separating curve. Its lift to the branched torus preserves
a rational slope. The stabilizer of a rational slope in $G$ consists
of parabolic elements and the identity. Thus $Q$ vanishes on reducible
mapping classes. It also vanishes on periodic classes, whose images in
$G$ have finite order. In particular, it vanishes on every half twist.

\subsection{The strip comparison and the two types of endpoint}

Recall the configuration torus $T=\operatorname{Sym}(L)$ and the braid
label $\beta_x$ of a configuration chord $x$ with endpoints on $T$.
The label is defined by closing the chord with paths in $T$; these
choices cause no ambiguity because $\pi_1(T)\to\pi_1(Y)$ is zero.

Suppose a continuation strip has endpoint labels $\beta_0,\beta_1$ and
$r$ diagonal intersections. Lemma~\ref{lem:meridian} expresses
$\beta_1\beta_0^{-1}$ as a product of $r$ conjugates of inverse half twists.
Their $Q$-values vanish by homogeneity. If $r\ge1$, conjugation invariance
and the defect inequality give
\[
|Q(\beta_1\beta_0^{-1})|\le(r-1)D,
\qquad |Q(\beta_0)-Q(\beta_1)|\le rD.
\]
If $r=0$, the labels coincide, and the last inequality still holds.
Together with Lemma~\ref{lem:strip}, this gives, for a selected pair of
chords of paths ending at $\phi,\psi$,
\begin{equation}\label{eq:direct-Q-comparison}
|Q(\beta_0)-Q(\beta_1)|
\le\frac{4D}{\eta}d_{\rm H}(\phi,\psi)+\varepsilon,
\end{equation}
where $\varepsilon>0$ can be arbitrarily small. The paths can be chosen
with Hamiltonian difference of length arbitrarily close to the endpoint
Hofer distance, as explained in Section~\ref{sec:braid-topology}.
The strip selects its chords. We therefore need statements valid for
\emph{all} possible endpoint chords.

For a path ending at $f_0^n$, every configuration chord has
\begin{equation}\label{eq:direct-source-Q}
Q(\beta_x)=n.
\end{equation}
Indeed, $f_0^n$ fixes $T$ pointwise. Its chords vary continuously with
their starting configuration in the connected torus $T$, so their
labels have one common evaluated mapping class. This class projects to
$W^n$, and $\mathfrak q(W^n)=n$. A different Hamiltonian path changes the braid
by the evaluation of a loop of ambient diffeomorphisms; its mapping
class is the identity, so the value is unchanged.

\begin{lemma}\label{lem:direct-equator-stabilizer}
Every configuration chord of a Hamiltonian path ending at a map
$h\in\operatorname{Stab}(E_0)$ has $Q$-value zero. The stabilizer here
is setwise, and its elements may interchange the hemispheres.
\end{lemma}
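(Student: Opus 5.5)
The plan is to identify the evaluated mapping class $\operatorname{ev}(\beta_x)$ of a chord $x$ of a path ending at $h$ with an explicit homeomorphism of $(S^2,P)$ that preserves $E_0$ setwise. Then $\pi(\operatorname{ev}(\beta_x))$ lies in the stabilizer of a rational slope in $G$. By the discussion preceding this lemma, $\mathfrak q$ vanishes there.

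\emph{Step 1: the evaluated class is $\sigma_1\circ h\circ\sigma_0$.} Let $x(t)=\phi_t(x(0))$ with $\phi_1=h$ and $x(0),x(1)\in T$. Choose closing paths $\gamma_0$ from $P$ to $x(0)$ and $\gamma_1$ from $x(1)$ to $P$ inside $T$; the label does not depend on this choice. Along these paths each marked point moves only along its own circle $L_i=\partial D_i$. Realize $\gamma_0$ and $\gamma_1$ by ambient point-pushing isotopies $\sigma_0^t$ and $\sigma_1^t$, each supported in a thin annular neighbourhood of $L=\bigcup_i L_i$. The concatenated ambient isotopy $\sigma_0^t$, then $\phi_t$, then $\sigma_1^t$, moves $P$ along $\gamma_0\sharp x\sharp\gamma_1$. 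Its time-one map is therefore $g=\sigma_1\circ h\circ\sigma_0$, where $\sigma_i=\sigma_i^1$. Since $g(P)=P$, this gives $\operatorname{ev}(\beta_x)=[g]\in\Mod(S^2,P)$. Whether this is $[g]$ or $[g]^{-1}$ under the antihomomorphism convention does not matter, because $\mathfrak q$ is homogeneous.

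\emph{Step 2: $g$ preserves $E_0$.} Each disk $D_{\sigma,\tau}$ lies in $\{|x|\ge c_0\}$, because $|xz|\ge c_0$ and $|z|\le1$. Hence the annular supports of $\sigma_0,\sigma_1$ can be chosen disjoint from $E_0=\{x=0\}$, and both maps fix $E_0$ pointwise. Since $h(E_0)=E_0$ setwise, we get $g(E_0)=E_0$, possibly with the two hemispheres interchanged. The curve $E_0$ misses $P$ and splits it into two pairs, since each hemisphere contains two disks. So $E_0$ is essential in $S^2\setminus P$, and $[g]$ fixes its isotopy class.

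\emph{Step 3: conclude $\mathfrak q=0$.} By the equivariant bijection between essential curves and primitive vectors up to sign, $\pi([g])=[A]$ satisfies $Av=\pm v$ for the primitive vector $v$ of $E_0$. Hence $[A]$ is the identity, a parabolic element, or of finite order; an elliptic element fixing a rational slope is impossible anyway. Lemma~\ref{lem:counting-q} gives $\mathfrak q(\pi[g])=0$, and therefore $Q(\beta_x)=0$.

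The only delicate point is Step 1: checking that the closing paths in $T$ can be realized by isotopies supported away from $E_0$. This is exactly where the definition of the caps with $|xz|\ge c_0$ is used. Independence of the closing paths, which follows from $\pi_1(T)\to\pi_1(Y)$ being zero, lets us use these particular isotopies.
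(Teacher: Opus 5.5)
Your proposal is correct and follows essentially the same route as the paper. You close the chord by paths in $T$ realized by isotopies in collars of the link components disjoint from $E_0$, so the evaluated class is represented by $\sigma_1 h\sigma_0$; this preserves $P$ and the essential curve $E_0$ setwise, hence projects to a parabolic or trivial element on which $\mathfrak q$ vanishes. Your explicit check that each cap lies in $\{|x|\ge c_0\}$, which justifies the disjointness of the collars from $E_0$, fills in a detail the paper only asserts.
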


\begin{proof}
Close the chord to the base configuration $P\in T$ by two paths in
$T$. Realize these closing paths by ambient isotopies in collars of
the four link components, with endpoints $\Gamma_0,\Gamma_1$. These collars are
disjoint from $E_0$, so the two maps preserve $E_0$. The evaluated
mapping class of the closed chord is represented by $\Gamma_1h\Gamma_0$.
This map preserves both $P$ and $E_0$.

There are two points of $P$ on each side of $E_0$, so $E_0$ is an
essential curve in the four-punctured sphere. Its branched lift determines
a rational slope preserved by the projected mapping class. That projection
is therefore parabolic or the identity, and its $Q$-value is zero.
Preserving the curve setwise suffices; no preservation of its two sides is needed.
\end{proof}

The small regularizing perturbations required in Floer theory do not
change these endpoint conclusions. For a fixed Hamiltonian path, every
chord of a sufficiently small smooth perturbation has the same braid
label as some exact chord. Otherwise, compactness of the initial
configuration torus supplies a sequence converging to an exact chord,
and sufficiently close paths in the collision-free configuration space
are homotopic with endpoints on $T$. This is the stability argument
from Section~\ref{sec:braid-topology}. It applies separately to each
chosen pair of paths; no uniform perturbation size is needed.

\subsection{Passing from the stabilizer to the equator space}

Apply \eqref{eq:direct-Q-comparison} to $\phi=f_0^n$ and any
$h\in\operatorname{Stab}(E_0)$. By
\eqref{eq:direct-source-Q} and
Lemma~\ref{lem:direct-equator-stabilizer}, every selected pair of
endpoint chords has $Q$-difference of absolute value $|n|$. Sending
the perturbation and length errors to zero gives
\[
d_{\rm H}(f_0^n,h)\ge\frac{\eta}{4D}|n|.
\]

The quotient formula for the Lagrangian Hofer metric is
\begin{equation}\label{eq:direct-quotient-formula}
d_{\mathcal E}(E_0,\phi(E_0))
=\inf_{h\in\operatorname{Stab}(E_0)}d_{\rm H}(\phi,h).
\end{equation}
Indeed, all maps carrying $E_0$ to $\phi(E_0)$ form the coset
$\phi\operatorname{Stab}(E_0)$. The definition of $d_{\mathcal E}$,
inversion of the stabilizer, and bi-invariance of Hofer's metric
identify the two infima in \eqref{eq:direct-quotient-formula}.
Therefore
\[
d_{\mathcal E}(E_0,f_0^n(E_0))\ge\frac{\eta}{4D}|n|.
\]
For the upper bound, use $\norm{f_0}_{\rm H}\le4$. Hamiltonian invariance of
$d_{\mathcal E}$ now yields
\[
\frac{\eta}{4D}|m-n|
\le d_{\mathcal E}\bigl(f_0^m(E_0),f_0^n(E_0)\bigr)
\le4|m-n|.
\]
Finally, $\eta\ge1/48$ and $D=744$, so
$\eta/(4D)\ge1/142848$. This proves
\eqref{eq:direct-equator-escape}, and hence gives the promised
independent proof of Theorem~\ref{thm:equators}.

\section{Miscellaneous results}\label{sec:misc}

We present a few further corollaries of the results in this paper.

\subsection{On the asymptotic cone of $\Ham(S^2).$}\label{sec:cone}

Recall that the asymptotic cone of a group $G$ with a bi-invariant metric is naturally a group \cite{CZ11}.

\begin{corollary}\label{cor:hofer-asymptotic-cone}
For every non-principal ultrafilter $\omega$, the asymptotic cone
\[
\operatorname{Cone}_{\omega}
\bigl(\operatorname{Ham}(S^2),d_{\rm H}\bigr),
\]
defined with scaling sequence $s_n=n$, contains a subgroup
isomorphic to $F_2$ and a nontrivial element of order two.
\end{corollary}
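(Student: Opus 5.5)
The plan is to push the bi-Lipschitz embedding of Theorem~\ref{thm:f2embedding} into the cone and separately exhibit an involution. Recall the setup: the cone consists of sequences $(g_n)$ with $\norm{g_n}_{\rm H}=O(n)$, modulo sequences with $\lim_\omega \norm{g_n}_{\rm H}/n=0$. The group structure is pointwise multiplication, and it is well defined because $d_{\rm H}$ is bi-invariant \cite{CZ11}.

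For the free subgroup, let $\Phi$ be the monomorphism of Theorem~\ref{thm:f2embedding}. I first want a group embedding $\Psi:F_2\to F_2$ such that the induced map $w\mapsto[(\Phi(\Psi_n(w)))_n]$ lands in the cone injectively. The natural choice is the "power-rescaling" sequence $\Psi_n(x)=x^n$, $\Psi_n(y)=y^n$. This is a homomorphism for each fixed $n$, so $w\mapsto[(\Phi(\Psi_n(w)))_n]$ is a homomorphism into the cone. The upper bound $\norm{\Phi(\Psi_n(w))}_{\rm H}\le 4\nu_F(\Psi_n(w))\le 4n|w|$ shows these sequences are admissible. For injectivity, take $w\neq1$ reduced. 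By Theorem~\ref{thm:f2embedding} it suffices to show
\[
\nu_F(\Psi_n(w))\ge c_w n
\]
with $c_w>0$. I would use the cancellation-norm description from \cite{BGKM16}, as already used in Section~\ref{sec:lemmas}: $\nu_F$ is the minimal number of letter deletions from the word $\Psi_n(w)$ yielding the identity. Consider a syllable $x^{\pm1}$ of $w$ whose neighbours in $w$ are letters of $y$. Its block $x^{\pm n}$ in $\Psi_n(w)$ must be reduced to a net exponent that cancels against other $x$-blocks after free reduction, so either many deletions occur or distinct $y$-blocks must be wholly deleted, which also costs about $n$.

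The main obstacle is this combinatorial lower bound. The first thing I would try is an induction on the syllable length of $w$: deleting fewer than $n/2$ letters leaves every block with exponent of the same sign and absolute value greater than $n/2$. The resulting word is then a nontrivial reduced word in the syllables, unless some block becomes exactly empty, which is impossible with fewer than $n$ deletions. That argument gives $\nu_F(\Psi_n(w))\ge n/2$, provided I count carefully that each block has length exactly $n$ and blocks of the same generator are separated in the reduced word. This yields a nontrivial cone element for each $w\ne1$, and hence an injective image of $F_2$.

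For the element of order two, I would take the sequence $g_n=f_0^n\,\mathcal R\,f_0^{-n}$, where $\mathcal R$ is the half-turn rotation. Since $\mathcal R^2=\id$ and $\norm{\mathcal R}_{\rm H}=1/2$, we get $g_n^2=\id$. The sequence has bounded norm, however, so it is trivial in the cone. A better candidate is $h_n=f_0^{n}\,\mathcal R\,f_0^{-n}\mathcal R^{-1}$ composed appropriately. I would instead choose a sequence $\sigma_n$ with $\sigma_n^2=\id$ exactly and $\norm{\sigma_n}_{\rm H}\gtrsim n$. The candidate is $\sigma_n=f_0^n\mathcal R' f_0^{-n}\mathcal R'$, where $\mathcal R'$ is an involution preserving $E_0$ and conjugating $f_0$ to $f_0^{-1}$ up to bounded error. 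The obstacle is to make $\sigma_n^2$ exactly, or up to $o(n)$ error, trivial, while using the equator estimate \eqref{eq:direct-equator-escape} to bound $\norm{\sigma_n}_{\rm H}$ from below by roughly $2n/142848$.
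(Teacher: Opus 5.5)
Your construction of the free subgroup is correct, and a little more elementary than the paper's. For $w\ne1$ in syllable form, every syllable of $\Psi_n(w)$ has length at least $n$, so fewer than $n$ deletions leave a nonempty freely reduced word. This gives $\nu_F(\Psi_n(w))\ge n$ and hence $\norm{\Phi(\Psi_n(w))}_{\rm H}\ge n/39936$, which is enough for injectivity and replaces the exact scaling $\nu_F(\delta_n(w))=n\nu_F(w)$ that the paper quotes from \cite{KL26}.

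The genuine gap is the element of order two, for which you have no construction. Your candidate fails even in the idealized case. If $\mathcal R'f_0\mathcal R'=f_0^{-1}$ held exactly, then $\mathcal R'f_0^{-n}\mathcal R'=f_0^{n}$ and $\sigma_n=f_0^{2n}$. Its class in the cone has infinite order, since $\norm{f_0^{2kn}}_{\rm H}\ge d_{\mathcal E}(E_0,f_0^{2kn}(E_0))\ge 2kn/142848$ by \eqref{eq:direct-equator-escape}. Moreover, exact involutions $\sigma_n$ with $\norm{\sigma_n}_{\rm H}\gtrsim n$ do not exist. A nontrivial involution in $\Ham(S^2)$ is smoothly conjugate to a rotation by $\pi$ (average a metric and uniformize). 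An equivariant Moser argument makes that conjugacy area-preserving, and Hofer's norm is invariant under symplectic conjugation. So every such involution has norm $\norm{\mathcal R}_{\rm H}=1/2$ and is trivial in the cone. An order-two element of the cone must therefore be represented by genuinely approximate involutions, with $\lim_\omega\norm{\sigma_n}_{\rm H}/n>0$ but $\lim_\omega\norm{\sigma_n^2}_{\rm H}/n=0$. Your proposal produces no such sequence.

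The missing idea is that $\Phi$ is bi-Lipschitz on all of $(F_2,\nu_F)$, not only on the rescaled copies $\Psi_n(F_2)$. Hence $[(g_n)]\mapsto[(\Phi(g_n))]$ is a well-defined injective homomorphism $\Phi_\omega$ from the whole cone $\operatorname{Cone}_\omega(F_2,\nu_F)$ into $\operatorname{Cone}_\omega(\Ham(S^2),d_{\rm H})$. The upper bound $4\nu_F$ makes the image sequences admissible, and the lower bound $\nu_F/39936$ gives injectivity. That cone is much larger than $F_2$ and contains a nontrivial element of order two by \cite[Theorem~1.1]{KLN26}. Concretely, there are $g_n\in F_2$ with $0<\lim_\omega\nu_F(g_n)/n<\infty$ and $\lim_\omega\nu_F(g_n^2)/n=0$, and the maps $\Phi(g_n)$ are then exactly the approximate involutions you need. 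This torsion is a nontrivial combinatorial fact about the cancellation norm on $F_2$. That is why the paper imports it rather than trying to assemble an involution from $f_0$ and a rotation.
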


\begin{proof}
Let $\nu_F$ denote the conjugation-invariant generator norm on
$F_2=\langle x,y\rangle$. The quasi-isometric group monomorphism
\[
\Phi:(F_2,\nu_F)\longrightarrow
\bigl(\operatorname{Ham}(S^2),d_{\rm H}\bigr)
\]
constructed above induces a bi-Lipschitz group monomorphism
\[
\Phi_\omega:
\operatorname{Cone}_\omega(F_2,\nu_F)
\longrightarrow
\operatorname{Cone}_\omega
\bigl(\operatorname{Ham}(S^2),d_{\rm H}\bigr),
\qquad
[g_n]\longmapsto[\Phi(g_n)].
\]
Indeed, the additive constants in the quasi-isometry inequalities
vanish after division by $n$ and passage to the ultralimit.

By \cite[Proposition~3.21 and Lemma~3.29(a)]{KL26},
the homomorphisms
$\delta_n:F_2\to F_2$ defined by
$\delta_n(x)=x^n$ and $\delta_n(y)=y^n$ satisfy
\[
\nu_F(\delta_n(w))=n\nu_F(w).
\]
Consequently, $w\mapsto[\delta_n(w)]$ is an isometric group
monomorphism from $F_2$ into its asymptotic cone.
Furthermore, \cite[Theorem~1.1]{KLN26} shows that this cone
contains a nontrivial element of order two.
Both assertions therefore follow by applying $\Phi_\omega$.
\end{proof}

\subsection{On maps of bounded topological entropy}

For every fixed $C>0$, one can replace the autonomous maps in
Theorem~\ref{thm:autonomous} by Hamiltonian diffeomorphisms of
topological entropy $h_{\mathrm{top}}$ less than $C$, after replacing $f$ by a
sufficiently large positive iterate if necessary.
Indeed, Humili\`ere \cite[Proposition~3.1.10]{Hum17} proved that
\[
\ell(g)\le K_\omega h_{\mathrm{top}}(g),
\]
where $K_\omega>0$ depends only on the area normalization.
Since the asymptotic displacement $\ell$ is $1$-Lipschitz with respect to Hofer's metric and satisfies
$\ell(f^n)=n\ell(f)$, we obtain
\[
d_{\rm H}\bigl(f^n,
 \{g\in\operatorname{Ham}(S^2):h_{\mathrm{top}}(g)<C\}\bigr)
\ge n\ell(f)-K_\omega C.
\]
As $\ell(f)>0$, this gives the required linear growth.

\subsection{Large-scale geometry of the space of equators}

\begin{corollary}\label{cor:equator-quasi-flats}
Let $\Phi:F_2\to\operatorname{Ham}(S^2)$ be the homomorphism
from Theorem \ref{thm:f2embedding}. The orbit map
\[
\Psi:F_2\longrightarrow\mathcal E,
\qquad
\Psi(w)=\Phi(w)(E_0),
\]
is an injective quasi-isometric embedding, where $F_2$ carries
its conjugation-invariant generator metric. More precisely,
\begin{equation}\label{eq:equator-orbit-qi}
\frac{\nu_F(v^{-1}w)-234}{39936}
\le
d_{\mathcal E}\bigl(\Psi(v),\Psi(w)\bigr)
\le
4\nu_F(v^{-1}w)
\qquad (v,w\in F_2).
\end{equation}
Moreover, for every $m\ge1$, there exists a quasi-isometric
embedding
\[
(\mathbb Z^m,\|\cdot\|_1)
\longrightarrow
(\mathcal E,d_{\mathcal E})
\] of $\mathbb Z^m$ with the $L^1$-norm.
\end{corollary}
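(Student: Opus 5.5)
The plan is to reduce the equator distance to a Hofer distance from a stabilizer coset, and then rerun the free-group lower bound with the stabilizer absorbed into a bounded error. The upper bound is immediate: $d_{\mathcal E}(\Psi(v),\Psi(w))\le d_{\rm H}(\Phi(v),\Phi(w))\le 4\nu_F(v^{-1}w)$ by Theorem~\ref{thm:f2embedding}. For the lower bound, Hamiltonian invariance of $d_{\mathcal E}$ and the quotient formula \eqref{eq:direct-quotient-formula} give
\[
d_{\mathcal E}(\Psi(v),\Psi(w))=d_{\mathcal E}\bigl(E_0,\Phi(v^{-1}w)(E_0)\bigr)=\inf_{h\in\operatorname{Stab}(E_0)}d_{\rm H}\bigl(\Phi(v^{-1}w),h\bigr).
\]
So it suffices to bound $d_{\rm H}(\Phi(u),h)$ from below by $(\nu_F(u)-234)/39936$ for every $u\in F_2$ and every $h\in\operatorname{Stab}(E_0)$. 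Injectivity of $\Psi$ follows once the lower bound is positive for large norms. For small nonzero norms I would argue separately: $\iota(u)\ne1$ is hyperbolic, or at least not parabolic, for $u\ne1$, so the class cannot preserve the slope of $E_0$. I would double-check this point, since it might alternatively be derived from the decoder.

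Fix $u$ and $h$. I would apply the Strip Lemma~\ref{lem:strip} and Lemma~\ref{lem:meridian} to a path to $\Phi(u)$ and a path to $h$ whose Hamiltonian difference has length close to $d_{\rm H}(\Phi(u),h)$, using the composition \eqref{eq:restored-4-3}. This gives chords $x_0$ of the $\Phi(u)$ path and $x_1$ of the $h$ path, with $r$ diagonal intersections and $\eta r\le 4d_{\rm H}+o(1)$. As in the proof of Theorem~\ref{thm:f2embedding}, the projected class of $\beta_{x_0}$ is $\iota(u)$. As in Lemma~\ref{lem:direct-equator-stabilizer}, the projected class of $\beta_{x_1}$ is some $p\in G$ preserving the slope of $E_0$, so $p$ is the identity or parabolic. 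Every such $p$ is therefore a power of a fixed conjugate of $SR$, namely $p=c(SR)^kc^{-1}$. The meridian factorization then writes $\iota(u)p^{-1}$ (up to inversion and order) as a product of $r$ conjugates of $(SR)^{\pm1}$, so $\tau(\iota(u)p^{-1})\le 2r$.

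The main obstacle is that $p$ may be an arbitrarily large power, so $\tau(p)$ is unbounded and cannot simply be subtracted. My plan is to use the decoder rather than $\tau$ on $p$. Since $p$ preserves the specific slope of $E_0$, I would take $c$ to be the fixed conjugator determined by $E_0$, or even try to show $c=1$ with the slope of $E_0$ corresponding to $U$. Its reduced word is then $c\,(SR)^{\pm k}\,c^{-1}$, whose exponent string is a single long run of one digit between fixed ends. Such a string contains no markers, because markers require two-digit alternation. Hence $\nu_F(\mathcal D(p))$ is bounded by a constant coming from the at most two joins with $c$.

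Then \eqref{eq:f2-decoder-defect} gives
\[
\nu_F(u)=\nu_F(\mathcal D(\iota(u)))\le \nu_F\bigl(\mathcal D(\iota(u)p^{-1})\bigr)+\nu_F(\mathcal D(p))+78\le 104\cdot 2r+\text{const}.
\]
With $\nu_F(\mathcal D(p))\le 2J=26$, the constant is $104$. The stated $234=26+208$ suggests the authors incur one more defect term, for example $26+78+\dots$, or scale it through the factor $208$. I would simply track constants to land on $\nu_F(u)-234\le 208r$, and then use $\eta\ge1/48$ to get the denominator $39936$.

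For the $\mathbb Z^m$ statement, I would compose $\Psi$ with a quasi-isometric embedding $(\mathbb Z^m,\|\cdot\|_1)\to(F_2,\nu_F)$. Concretely, I would send the basis to suitably separated commuting-free words and use the abelianization lower bound, or cite \cite{KL26} for such embeddings. Since $\Psi$ need not be defined as a homomorphism on $\mathbb Z^m$, only the metric embedding is required.
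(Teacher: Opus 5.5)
Your lower bound in \eqref{eq:equator-orbit-qi} follows the paper's argument. You compare $\Phi(u)$ with $h\in\operatorname{Stab}(E_0)$ by a strip, use the slope argument to make the $h$-label $p$ parabolic or trivial, get $\tau(\iota(u)^{-1}p)\le 2r$, and apply the decoder rather than $\tau$ to $p$, followed by one defect step.

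\textbf{Constants.} The paper writes $p=c(SR)^kc^{-1}$ with $c$ arbitrary. Applying \eqref{eq:f2-decoder-defect} twice gives $\nu_F(\mathcal D(p))\le156$, and that is where $234=156+78$ comes from. Your bound $\nu_F(\mathcal D(p))\le 2J$ via two reduced joins is not justified. Since $(SR)^k$ begins in one free factor and ends in the other, $c\,(SR)^k\,c^{-1}$ is never a reduced concatenation when $c\ne1$. Your alternative $c=1$ is correct and better. The twist $a$ preserves $E_0$, so $U=(SR)^2$ fixes the slope of $E_0$. The stabilizer of that slope in $\PSL(2,\mathbb Z)$ is the maximal parabolic subgroup containing $U$, namely $\langle SR\rangle$. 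Hence $p=(SR)^k$ and $\mathcal D(p)=1$, and you get the constant $78$, which implies the stated $234$.

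\textbf{Injectivity.} Your route differs from the paper's and is valid once $c=1$ is in hand. If $\Phi(u)(E_0)=E_0$, then $\Phi(u)$ lies in the stabilizer and fixes the caps, so $\iota(u)\in\langle SR\rangle$. Exact decoding then gives $u=\mathcal D(\iota(u))=\mathcal D((SR)^k)=1$, since $(SR)^{k}$ contains no marker.

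You do not need that every nontrivial $\iota(u)$ is hyperbolic. That is true, but for a general conjugate $c(SR)^kc^{-1}$ the decoder only gives $\nu_F(u)\le156$, so it would need extra work. The paper instead applies the lower bound to all powers $u^n$, obtaining $\nu_F(u^n)\le234$, and invokes unboundedness of powers in $(F_2,\nu_F)$ from \cite{BGKM16}. Your argument is self-contained; the paper's avoids identifying the stabilizer.

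\textbf{The $\mathbb Z^m$ assertion has a genuine gap.} The abelianization $F_2\to\mathbb Z^2$ has rank two. For $m\ge3$, any map of the form $n\mapsto w_1^{n_1}\cdots w_m^{n_m}$ has nonzero directions $n$ with $\sum_i n_i[w_i]=0$ in $\mathbb Z^2$, and along these the abelianization lower bound vanishes. So that method only handles $m\le2$.

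What you need are $m$ homogeneous quasimorphisms $\phi_i$ on $F_2$ with $\phi_i(w_j)=\delta_{ij}$, for example Brooks counting quasimorphisms for suitable words. They are conjugation invariant, so $|\phi_i(g)|\le C\nu_F(g)$. The quasimorphism inequality then gives
\[
\nu_F\Bigl(\bigl(\textstyle\prod_j w_j^{n_j}\bigr)^{-1}\prod_j w_j^{n'_j}\Bigr)\ge c\|n-n'\|_1-C'.
\]
Alternatively, cite \cite[Section~3]{BGKM16}, as the paper does. Composing with the quasi-isometric embedding $\Psi$ then finishes the proof.
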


\begin{proof}
Retain the notation $G=C_2*C_3$, the homomorphism
$\iota:F_2\to G$, the decoder $\mathcal D:G\to F_2$,
and the torsion norm $\tau$.

We first observe that
\begin{equation}\label{eq:decoder-parabolic-bound}
\nu_F(\mathcal D(p))\le156
\end{equation}
whenever $p\in G$ is parabolic or trivial.
Indeed, write $p=c(SR)^kc^{-1}$ for some $k\in\mathbb Z$.
The reduced word for $(SR)^k$ contains no marker, so
$\mathcal D((SR)^k)=1$.
Two applications of the decoder defect estimate, together
with $\mathcal D(c^{-1})=\mathcal D(c)^{-1}$, give
\eqref{eq:decoder-parabolic-bound}.

Fix $w\in F_2$ and $h\in\operatorname{Stab}(E_0)$.
For every $\varepsilon>0$, the strip comparison and the
stabilizer argument provide a parabolic or trivial element
$p\in G$ and an integer $r\ge0$ such that
\[
\eta r\le4d_{\rm H}(\Phi(w),h)+\varepsilon,
\qquad
\tau\bigl(\iota(w)^{-1}p\bigr)\le2r.
\]
Here the second inequality follows because every projected
half twist or its inverse has $\tau$-norm at most two.
The decoder estimates imply
\begin{align*}
\nu_F(w)
&\le
\nu_F\bigl(\mathcal D(\iota(w)^{-1}p)\bigr)
+\nu_F(\mathcal D(p))+78\\
&\le
104\,\tau\bigl(\iota(w)^{-1}p\bigr)+234\\
&\le208r+234.
\end{align*}
Using $\eta\ge1/48$ and letting $\varepsilon\to0$, we obtain
\[
\nu_F(w)\le39936\,d_{\rm H}(\Phi(w),h)+234.
\]
Taking the infimum over $h\in\operatorname{Stab}(E_0)$ gives
\[
d_{\mathcal E}(E_0,\Phi(w)(E_0))
\ge\frac{\nu_F(w)-234}{39936}.
\]
The upper bound follows from
\[
d_{\mathcal E}(E_0,\Phi(w)(E_0))
\le\|\Phi(w)\|_{\rm H}\le4\nu_F(w).
\]
Hamiltonian invariance now yields
\eqref{eq:equator-orbit-qi}.

To see that $\Psi$ is injective, suppose that
$\Psi(v)=\Psi(w)$ and put $u=v^{-1}w$.
Then $\Phi(u)$ preserves $E_0$, so
\eqref{eq:equator-orbit-qi} gives
$\nu_F(u^n)\le234$ for every $n\ge1$.
Every nontrivial element of a free group is detected by a
homogeneous quasimorphism and therefore has unbounded powers
in the conjugation-invariant generator norm; see
\cite{BGKM16}.
Hence $u=1$.

Finally, by \cite[Section~3]{BGKM16}, for every $m\ge1$
there is a quasi-isometric embedding of
$(\mathbb Z^m,\|\cdot\|_1)$ into $(F_2,d_F)$.
Composing with $\Psi$ proves the quasi-flat assertion.
\end{proof}

\subsection{On random walks in $\Ham(S^2)$}

We prove the following result, which complements Dawid's square-root lower bounds for the expected
Hofer norm of random walks~\cite{Daw26} by exhibiting a finitely
supported symmetric random walk on $\Ham(S^2)$ with positive linear
escape rate in Hofer's metric.

\begin{corollary}[Linear escape of symmetric random walks]
\label{cor:random-walks}
Let $A=\Phi(x)$ and $B=\Phi(y)$ be the Hamiltonian
diffeomorphisms constructed in Theorem \ref{thm:f2embedding} above.
Let $X_1,X_2,\ldots$ be independent random variables, each
uniformly distributed on $\{A,A^{-1},B,B^{-1}\}$, and put
\[
Z_n=X_1\cdots X_n.
\]
There exist constants
$v_{\rm H}\ge v_{\mathcal E}>0$ such that
\[
\frac{\|Z_n\|_{\rm H}}{n}\longrightarrow v_{\rm H},
\qquad
\frac{d_{\mathcal E}(Z_n(E),E)}{n}
\longrightarrow v_{\mathcal E}
\]
almost surely and in $L^1$, for every equator $E$.
In particular,
\[
\mathbb E\|Z_n\|_{\rm H}=v_{\rm H}n+o(n),
\qquad
\mathbb E\,d_{\mathcal E}(Z_n(E),E)
=v_{\mathcal E}n+o(n).
\]
\end{corollary}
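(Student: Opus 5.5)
The proof would apply Kingman's subadditive ergodic theorem to the two random processes $\|Z_n\|_{\rm H}$ and $d_{\mathcal E}(Z_n(E),E)$. The point of the corollary is then that the limits are \emph{positive}, and this comes from the lower bounds already proved together with the positive linear escape rate of the simple random walk on $F_2$ in the norm $\nu_F$.

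\textbf{Convergence.} Set $a_{m,n}=\|X_{m+1}\cdots X_n\|_{\rm H}$. Bi-invariance and the triangle inequality give $a_{0,n}\le a_{0,m}+a_{m,n}$. The array is stationary under the shift of the i.i.d.\ sequence $(X_i)$. It is integrable, since $a_{0,1}\le 4$ by the upper bound $\|A\|_{\rm H}\le 7/2$, $\|B\|_{\rm H}\le4$. Kingman's theorem therefore yields almost-sure and $L^1$ convergence of $a_{0,n}/n$ to a constant $v_{\rm H}$; the limit is constant because the shift is ergodic on an i.i.d.\ sequence.

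The same argument applies to $b_{m,n}=d_{\mathcal E}(X_{m+1}\cdots X_n(E),E)$. Subadditivity follows from the isometric action:
\[
d_{\mathcal E}(gh(E),E)\le d_{\mathcal E}(gh(E),g(E))+d_{\mathcal E}(g(E),E)=d_{\mathcal E}(h(E),E)+d_{\mathcal E}(g(E),E).
\]
Since $d_{\mathcal E}(g(E),E)\le\|g\|_{\rm H}$, we get $v_{\mathcal E}\le v_{\rm H}$. Independence of $E$ holds because changing $E$ to $\theta(E)$ changes $b_{0,n}$ by at most $2\|\theta\|_{\rm H}$, which vanishes after dividing by $n$. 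The expectation statements follow from $L^1$ convergence.

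\textbf{Positivity.} We have $Z_n=\Phi(W_n)$, where $W_n$ is the simple random walk on $F_2$. Corollary~\ref{cor:equator-quasi-flats} gives
\[
d_{\mathcal E}(Z_n(E_0),E_0)\ge(\nu_F(W_n)-234)/39936 .
\]
It therefore suffices to show $\liminf \nu_F(W_n)/n>0$ almost surely. This is the main step.

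Since $\nu_F$ is not word length, I would bound it below by a homogeneous-type counting quasimorphism, using the fact that $\nu_F(w)\ge |\psi(w)|/(D_\psi+\max_e|\psi(e)|)$ for any quasimorphism $\psi$. A single Brooks counting function for a fixed word $u$ has mean drift zero along a symmetric walk, so it does not help directly. Instead, I would use a sum of absolute values of many independent counting quasimorphisms, or more simply the cancellation-norm description of \cite{BGKM16}.

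First try: $\nu_F(w)$ is at least the number of disjoint occurrences of $xyx^{-1}y^{-1}$-type ``unremovable'' blocks in the reduced word. A reduced word is recovered by deleting letters only if the deletions pair off along a non-crossing matching. Hence $\nu_F(w)\ge$ (reduced length) $-$ 2$\cdot$(size of the largest non-crossing matching of each letter with an inverse letter). For a reduced random word, such a matching is a long-range pairing, and a linear fraction of letters should remain unmatched because the walk's reduced word is i.i.d.\ non-backtracking. I expect the rigorous linear lower bound here to be the obstacle.

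A cleaner alternative is to quote \cite{BGKM16}/\cite{KL26}: $\nu_F$ dominates, up to constants, the number of letters in the reduced word minus the maximal non-crossing cancellation. It is known that random reduced words have linear such defect, so subadditive ergodicity of $\nu_F(W_n)$ gives a limit $v_F>0$. Then
\[
v_{\mathcal E}\ge v_F/39936>0 .
\]
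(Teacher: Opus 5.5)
Your convergence argument (Kingman's theorem for $\|Z_n\|_{\rm H}$ and for $d_{\mathcal E}(Z_n(E),E)$, the inequality $v_{\mathcal E}\le v_{\rm H}$, independence of $E$, and the reduction of positivity to the lower bound of Corollary~\ref{cor:equator-quasi-flats}) matches the paper's. The gap is the step you yourself call the main one. You never prove that $\nu_F(W_n)$ grows linearly along the simple random walk on $F_2$. Without this you have convergence, but not $v_{\mathcal E}>0$, and positivity is the actual content of the corollary.

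Your ``first try'' is unfinished, and the claim that commutator-type blocks are unremovable is unjustified. The quasimorphism route cannot work even in principle. For a fixed quasimorphism $\psi$ and a symmetric step distribution, $\psi(W_n)=o(n)$ almost surely. A ratio $\sum_i|\psi_i(w)|/\sum_i(D_i+M_i)$ never exceeds the best single ratio $|\psi_i(w)|/(D_i+M_i)$. Hence no fixed finite family of quasimorphisms gives a linear lower bound.

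Your ``cleaner alternative'' just asserts that it is known that random words have linear cancellation defect, with a pointer to \cite{BGKM16} and \cite{KL26}. Those references give the combinatorial description of $\nu_F$ and its scaling under $x\mapsto x^n$, $y\mapsto y^n$, but not this probabilistic estimate. The combinatorial description is: $\nu_F(w)$ is the number of letters left unmatched by a maximal non-crossing matching of letters with inverse letters. It holds for \emph{any} word representing $w$, so you can work directly with the unreduced i.i.d.\ word $\xi_1\cdots\xi_n$ rather than with non-backtracking reduced words.

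The missing ingredient is a genuine theorem about random non-crossing matchings. The paper imports it from Gadgil--Krishnapur \cite[Theorem~1]{GK23}: $\mathbb E\,\nu_F(W_n)/n\to\lambda_2$ for some $\lambda_2>0$. Then Kingman's theorem, applied to the subadditive process $\nu_F(W_n)$, identifies its almost-sure limit as $\lambda_2$. The chain of inequalities $(\nu_F(W_n)-234)/39936\le d_{\mathcal E}(Z_n(E_0),E_0)\le\|Z_n\|_{\rm H}\le 4\nu_F(W_n)$ then gives $0<\lambda_2/39936\le v_{\mathcal E}\le v_{\rm H}\le 4\lambda_2$. If you replace your heuristic with this citation, your proof becomes the paper's.
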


\begin{proof}
Let $\xi_1,\xi_2,\ldots$ be independent and uniformly
distributed on $\{x,x^{-1},y,y^{-1}\}$, and write
\[
W_n=\xi_1\cdots\xi_n,
\qquad
X_j=\Phi(\xi_j),
\qquad
Z_n=\Phi(W_n).
\]
By Gadgil--Krishnapur \cite[Theorem~1]{GK23},
\begin{equation}\label{eq:random-cancellation-drift}
\lim_{n\to\infty}\frac{\mathbb E\,\nu_F(W_n)}{n}
=\lambda_2
\end{equation}
for some $\lambda_2>0$.

Each of the three sequences
\[
\nu_F(W_n),\qquad
\|Z_n\|_{\rm H},\qquad
d_{\mathcal E}(Z_n(E_0),E_0)
\]
is subadditive over the Bernoulli shift $\sigma$:
if $U_n$ denotes any one of these quantities, then
\[
U_{n+m}\le U_n+U_m\circ\sigma^n.
\]
For the last sequence, this follows from the triangle
inequality and Hamiltonian invariance of $d_{\mathcal E}$.
The shift is ergodic, and all three quantities lie between
$0$ and $4n$.
Kingman's subadditive ergodic theorem \cite{Kin73}
therefore gives deterministic almost-sure limits after
division by $n$.
The uniform bounds imply convergence in $L^1$ as well.
By \eqref{eq:random-cancellation-drift}, the limit for
$\nu_F(W_n)/n$ is $\lambda_2$.

Corollary~\ref{cor:equator-quasi-flats} and the Hofer
upper bound for $\Phi$ give
\[
\frac{\nu_F(W_n)-234}{39936}
\le
d_{\mathcal E}(Z_n(E_0),E_0)
\le
\|Z_n\|_{\rm H}
\le
4\nu_F(W_n).
\]
Passing to the limits yields
\[
0<
\frac{\lambda_2}{39936}
\le v_{\mathcal E}
\le v_{\rm H}
\le4\lambda_2.
\]

Finally, for every equator $E$, Hamiltonian invariance
and the triangle inequality imply
\[
\left|
d_{\mathcal E}(Z_n(E),E)
-d_{\mathcal E}(Z_n(E_0),E_0)
\right|
\le2d_{\mathcal E}(E,E_0).
\]
Thus the same limit holds for every $E$.
The expectation asymptotics follow from the
$L^1$ convergence.
\end{proof}


\begin{thebibliography}{CHMSS22}


\bibitem[AGK+19]{AGK19}
D.~Alvarez-Gavela, V.~Kaminker, A.~Kislev, K.~Kliakhandler,
A.~Pavlichenko, L.~Rigolli, D.~Rosen, O.~Shabtai,
B.~Stevenson and J.~Zhang,
\emph{Embeddings of free groups into asymptotic cones of
Hamiltonian diffeomorphisms},
J. Topol. Anal. \textbf{11} (2019), no.~2, 467--498.


\bibitem[AM24]{AM24}
M.~R.~R.~Alves and M.~Meiwes,
\emph{Braid stability and the Hofer metric},
Ann. H. Lebesgue \textbf{7} (2024), 521--581.

\bibitem[AMU06]{AMU06}
J.~E.~Andersen, G.~Masbaum, and K.~Ueno,
Topological quantum field theory and the Nielsen--Thurston
classification of $M(0,4)$,
\emph{Math. Proc. Cambridge Philos. Soc.}
\textbf{141} (2006), no.~3, 477--488.

\bibitem[BGKM16]{BGKM16}
M.~Brandenbursky, \'{S}.~R.~Gal, J.~K\k{e}dra and M.~Marcinkowski,
\emph{The cancellation norm and the geometry of bi-invariant
word metrics},
Glasgow Math. J. \textbf{58} (2016), no.~1, 153--176.

\bibitem[BMS22]{BMS22}
M.~Brandenbursky, M.~Marcinkowski and E.~Shelukhin,
\emph{The Schwarz--Milnor lemma for braids and area-preserving
diffeomorphisms},
Selecta Math. (N.S.) \textbf{28} (2022), no.~4,
Paper No.~74, 20~pp.

\bibitem[BS15]{BS15}
M.~Brandenbursky and E.~Shelukhin,
\emph{On the $L^p$-geometry of autonomous Hamiltonian diffeomorphisms
of surfaces},
Math. Res. Lett. \textbf{22} (2015), no.~5, 1275--1294.

\bibitem[BS17]{BS17}
M.~Brandenbursky and E.~Shelukhin,
\emph{The $L^p$-diameter of the group of area-preserving
diffeomorphisms of $S^2$},
Geom. Topol. \textbf{21} (2017), no.~6, 3785--3810.

\bibitem[BS26]{BS26}
M.~Brandenbursky and E.~Shelukhin,
\emph{The $L^p$-diameter of the space of contractible loops},
Bull. Lond. Math. Soc. \textbf{58} (2026), no.~5, e70396.
arXiv:2509.07270.


\bibitem[BFPS]{BFPS} Buhovsky L, Feuerstein B, Polterovich L, Shelukhin E. A dichotomy for the Hofer growth of area preserving maps on the sphere via symmetrization. arXiv preprint arXiv:2408.08854. 2024 Aug 16.

\bibitem[Che24]{Che24}
G.~Chen,
\emph{Stability of the braid types defined by the symplectomorphisms
preserving a link},
J. Fixed Point Theory Appl. \textbf{26} (2024), no.~1, Paper No.~8.
arXiv:2309.03845.

\bibitem[Cho24]{Cho24}
A.~Chor,
\emph{Eggbeater dynamics on symplectic surfaces of genus 2 and 3},
Ann. Math. Qu\'ebec \textbf{48} (2024), no.~1, 113--142.

\bibitem[CG24]{CG24}
D.~Connery-Grigg,
\emph{Hamiltonian Floer theory on surfaces: linking, positively
transverse foliations and spectral invariants},
Invent. Math. \textbf{237} (2024), no.~3, 1377--1468.

\bibitem[CHMSS22]{CHMSS22}
D. Cristofaro-Gardiner, V. Humili\`ere, C. Y. Mak, S. Seyfaddini and I. Smith,
\emph{Quantitative Heegaard Floer cohomology and the Calabi invariant},
Forum Math. Pi \textbf{10} (2022), e27.
doi:10.1017/fmp.2022.18. Numbered references in the text follow the revised
version, arXiv:2105.11026v2 (2024).

\bibitem[CZ11]{CZ11}
D.~Calegari and D.~Zhuang,
\emph{Stable $W$-length},
in \emph{Topology and geometry in dimension three},
Contemp. Math., vol.~560,
Amer. Math. Soc., Providence, RI, 2011, pp.~145--169.


\bibitem[Daw25]{Daw25}
A.~Dawid,
\emph{Random Hamiltonians I: Probability measures and random walks
on the Hamiltonian diffeomorphism group},
preprint (2025), arXiv:2510.03190.

\bibitem[Daw26]{Daw26}
A.~Dawid,
\emph{Random Hamiltonians II: A central limit theorem and the Hofer
geometry of random walks},
preprint (2026), arXiv:2608.07673.


\bibitem[FM12]{FM12}
B.~Farb and D.~Margalit,
\emph{A Primer on Mapping Class Groups},
Princeton Mathematical Series, vol.~49,
Princeton University Press, 2012.


\bibitem[GK23]{GK23}
S.~Gadgil and M.~Krishnapur,
Random words in free groups, non-crossing matchings and
RNA secondary structures,
Indian J. Pure Appl. Math. \textbf{54} (2023),
no.~1, 146--158.
doi:10.1007/s13226-022-00240-x.

\bibitem[Gru26]{Gru26}
N.~Grunder,
\emph{Braid stability and a Floer theory of braid isotopies},
preprint (2026), arXiv:2609.34961.

\bibitem[Hof90]{Hof90}
H. Hofer, \emph{On the topological properties of symplectic maps},
Proc. Roy. Soc. Edinburgh Sect. A \textbf{115} (1990), 25--38.
\bibitem[JW25]{JW25}
Y. Jia and R. Webb, \emph{Balanced curves, quasimorphisms, and the equator
conjecture}, preprint (2025), arXiv:2511.22698.

\bibitem[Hum17]{Hum17}
V.~Humili\`ere,
\emph{G\'eom\'etrie symplectique $C^0$ et s\'electeurs d'action},
Habilitation thesis, Universit\'e Pierre et Marie Curie, 2017.
\url{https://webusers.imj-prg.fr/~vincent.humiliere/papers/HDR.pdf}.




\bibitem[Kha09]{Kha09}
M.~Khanevsky,
\emph{Hofer's metric on the space of diameters},
J. Topol. Anal. \textbf{1} (2009), no.~4, 407--416.
arXiv:0907.1499.

\bibitem[Kha11]{Kha11}
M.~Khanevsky,
\emph{Geometric and topological aspects of Lagrangian submanifolds:
intersections, diameter and Floer theory},
Ph.D. thesis, Tel Aviv University, 2011.

\bibitem[Kha16]{Kha16}
M.~Khanevsky,
\emph{Hamiltonian commutators with large Hofer norm},
J. Symplectic Geom. \textbf{14} (2016), no.~4, 1175--1188.

\bibitem[Kha21]{Kha21}
M.~Khanevsky,
\emph{Non-autonomous curves on surfaces},
J. Mod. Dyn. \textbf{17} (2021), 305--317.
arXiv:1906.07884.

\bibitem[Kha22a]{Kha22a}
M.~Khanevsky,
\emph{A gap in the Hofer metric between integrable and autonomous
Hamiltonian diffeomorphisms on surfaces},
preprint (2022), arXiv:2205.03492.

\bibitem[Kha22b]{Kha22b}
M.~Khanevsky,
\emph{Hofer's distance between eggbeaters and autonomous Hamiltonian
diffeomorphisms on surfaces},
preprint (2022), arXiv:2205.15806.

\bibitem[Kin73]{Kin73}
J.~F.~C.~Kingman,
Subadditive ergodic theory,
Ann. Probab. \textbf{1} (1973), no.~6, 883--899.

\bibitem[KL26]{KL26}
J.~K\k{e}dra and A.~Libman,
\emph{Directional asymptotic cones of groups equipped with
bi-invariant metrics},
arXiv:2207.12704, version~3 (2026).

\bibitem[KLN26]{KLN26}
J.~K\k{e}dra, A.~Libman, and Z.~Nie,
\emph{Torsion in asymptotic cones of free groups},
arXiv:2608.10854 (2026).


\bibitem[McD]{McD}
D.~McDuff,
\emph{Problem session},
Oberwolfach Rep. \textbf{3} (2006), no.~3, 2049--2051
(Report~33/2006).

\bibitem[MS17]{MS17}
D.~McDuff and D.~Salamon,
\emph{Introduction to symplectic topology},
third edition, Oxford University Press, Oxford, 2017.

\bibitem[Mor25]{Mor25}
F.~Morabito,
\emph{Link Floer homology and a Hofer pseudometric on braids},
J. Symplectic Geom. \textbf{23} (2025), no.~3, 713--749.
arXiv:2301.08105.

\bibitem[MT25]{MT25}
F.~Morabito and I.~Trifa,
\emph{Hofer energy and link preserving diffeomorphisms in higher genus},
J. Mod. Dyn. \textbf{21} (2025), 543--577.
arXiv:2404.01052.

\bibitem[Pol01]{Pol01}
L. Polterovich, \emph{The geometry of the group of symplectic diffeomorphisms},
Lectures in Mathematics ETH Z\"urich (Birkh\"auser, Basel, 2001).

\bibitem[PS16]{PS16}
L.~Polterovich and E.~Shelukhin,
\emph{Autonomous Hamiltonian flows, Hofer's geometry and persistence
modules},
Selecta Math. (N.S.) \textbf{22} (2016), no.~1, 227--296.

\bibitem[PS23]{PS23}
L. Polterovich and E. Shelukhin, \emph{Lagrangian configurations and Hamiltonian
maps}, Compos. Math. \textbf{159} (2023), 2483--2520.
doi:10.1112/S0010437X23007455.

\bibitem[PSS17]{PSS17}
L.~Polterovich, E.~Shelukhin and V.~Stojisavljevi\'c,
\emph{Persistence modules with operators in Morse and Floer theory},
Mosc. Math. J. \textbf{17} (2017), no.~4, 757--786.


\bibitem[ST26]{ST26}
B.~Serraille, I.~Trifa,
\emph{On link quasimorphisms on the sphere and the equator
              conjecture},
Math. Ann. \textbf{395} (2026), no.~4, Paper No.~94, 22.

\bibitem[Tri24]{Tri24}
I.~Trifa,
\emph{Hofer distance on Lagrangian links inside the disc},
J. Fixed Point Theory Appl. \textbf{26} (2024), no.~4, Paper No.~59.
arXiv:2409.17828.

\bibitem[Wan26]{Wan26}
Z.~W.~Wang,
\emph{Complexity of Hofer's geometry in higher-dimensional manifolds},
J. Mod. Dyn. \textbf{22} (2026), 489--515.

\bibitem[Zha19]{Zha19}
J.~Zhang,
\emph{$p$-cyclic persistent homology and Hofer distance},
J. Symplectic Geom. \textbf{17} (2019), no.~3, 857--927.

\end{thebibliography}
\end{document}